\newcommand{\burl}[1]{\textcolor{blue}{\url{#1}}}
\newcommand{\emaillink}[1]{\textcolor{blue}{\href{mailto:#1}{#1}}}
\title{Generalizing Zeckendorf's Theorem to $f$-decompositions}
\author[Demontigny]{Philippe Demontigny}
\email{\emaillink{Philippe.P.Demontigny@williams.edu}}
\address{Department of Mathematics \& Statistics, Williams College, Williamstown, MA 01267}
\author[Do]{Thao Do}
\email{\emaillink{thao.do@stonybrook.edu}}
\address{Mathematics Department, Stony Brook University, Stony Brook, NY, 11794}
\author[Kulkarni]{Archit Kulkarni}
\email{\emaillink{auk@andrew.cmu.edu}}
\address{Department of Mathematical Sciences, Carnegie Mellon University, Pittsburgh, PA 15213}
\author[Miller]{Steven J. Miller}
\email{\emaillink{sjm1@williams.edu}, \emaillink{Steven.Miller.MC.96@aya.yale.edu}}
\address{Department of Mathematics and Statistics, Williams College, Williamstown, MA 01267}
\author[Moon]{David Moon}
\email{\emaillink{Dong.Hwan.Moon@williams.edu}}
\address{Department of Mathematics \& Statistics, Williams College, Williamstown, MA 01267}
\author[Varma]{Umang Varma}
\email{\emaillink{Umang.Varma10@kzoo.edu}}
\address{Department of Mathematics \& Computer Science, Kalamazoo College, Kalamazoo, MI, 49006}
\subjclass[2010]{11B39, 11B05  (primary) 65Q30, 60B10 (secondary)}
\keywords{Zeckendorf decompositions, recurrence relations, Stirling numbers of the first kind, Gaussian behavior}
\thanks{This research was conducted as part of the 2013 SMALL REU program at Williams College and was partially supported funded by NSF grant DMS0850577 and Williams College; the fourth named author was also partially supported by NSF grant DMS1265673. We would like to thank our  colleagues from the Williams College 2013 SMALL REU program for helpful discussions, especially Francisc Bozgan, Taylor Corcoran, Joseph R Iafrate, Jaclyn Porfilio and Jirapat Samranvedhya, as well as Kevin O'Bryant for helpful conversations.}
\newtheorem{thm}{Theorem}[section]
\newtheorem{cor}[thm]{Corollary}
\newtheorem{lem}[thm]{Lemma}
\newtheorem{prop}[thm]{Proposition}
\newtheorem{exa}[thm]{Example}
\newtheorem{defi}[thm]{Definition}
\numberwithin{equation}{section}
\newcommand{\N}{\mathbb{N}}
\newcommand{\Q}{\mathbb{Q}}
\newcommand{\R}{\mathbb{R}}
\newcommand{\E}{\mathbb{E}}
\newcommand{\stir}[2]{\genfrac{[}{]}{0pt}{}{#1}{#2}} % used to denote stirling numbers of the first kind
\begin{document}
\begin{abstract}
A beautiful theorem of Zeckendorf states that every positive integer can be uniquely decomposed as a sum of non-consecutive Fibonacci numbers $\{F_n\}$, where $F_1 = 1$, $F_2 = 2$ and $F_{n+1} = F_n + F_{n-1}$. For general recurrences $\{G_n\}$ with non-negative coefficients, there is a notion of a legal decomposition which again leads to a unique representation, and the number of summands in the representations of uniformly randomly chosen $m \in [G_n, G_{n+1})$ converges to a normal distribution as $n \to \infty$.

We consider the converse question: given a notion of legal decomposition, is it possible to construct a sequence $\{a_n\}$ such that every positive integer can be decomposed as a sum of terms from the sequence? We encode a notion of legal decomposition as a function $f:\N_0\to\N_0$ and say that  if $a_n$ is in an ``$f$-decomposition'', then the decomposition cannot contain the $f(n)$ terms immediately before $a_n$ in the sequence; special choices of $f$ yield many well known decompositions (including base-$b$, Zeckendorf and factorial). We prove that for any $f:\N_0\to\N_0$, there exists a sequence $\{a_n\}_{n=0}^\infty$ such that every positive integer has a unique $f$-decomposition using $\{a_n\}$. Further, if $f$ is periodic, then the unique increasing sequence $\{a_n\}$ that corresponds to $f$ satisfies a linear recurrence relation. Previous research only handled recurrence relations with no negative coefficients. We find a function $f$ that yields a sequence that cannot be described by such a recurrence relation. Finally, for a class of functions $f$, we prove that the number of summands in the $f$-decomposition of integers between two consecutive terms of the sequence converges to a normal distribution.
\end{abstract}
\maketitle

\setcounter{equation}{0}

\tableofcontents

 %%%%%%%%%%%%%%%%%%%%%%%%%%%%%%%%%%%%%%%%%%%%%%%%%%%%%%%%%%%%%%%%%%%%%%%%%%

\section{Introduction}
The Fibonacci numbers are a very well known sequence, whose properties have fascinated mathematicians for centuries. Zeckendorf \cite{Ze} proved an elegant theorem stating that every positive integer can be written uniquely as the sum of non-consecutive Fibonacci numbers $\{F_n\}$, where\footnote{We don't start the Fibonacci numbers $F_1=1$, $F_2=1$, $F_3=2$ because doing so would lead to multiple decompositions for some positive integers.} $F_1=1$, $F_2=2$ and $F_{n} = F_{n-1} + F_{n-2}$. More is true, as the number of summands for integers in $[F_n, F_{n+1})$ converges to a normal distribution as $n\to\infty$. These results have been generalized to Positive Linear Recurrence Relations of the form
\begin{align} G_{n+1} \ = \ c_1 G_n + \cdots + c_L G_{n+1-L}, \end{align}
where $L, c_1, \dots, c_L$ are non-negative and $L, c_1$ and $c_L$ are positive. For every such recurrence relation there is a notion of ``legal decomposition'' with which all positive integers have a unique decomposition as a non-negative integer linear combination of terms from the sequence, and the distribution of the number of summands of integers in $[G_n, G_{n+1})$ converges to a Gaussian. There is an extensive literature for this subject; see \cite{Al,BCCSW,Day,GT,Ha,Ho,Ke,Len,MW1,MW2} for results on uniqueness of decomposition, \cite{DG,FGNPT,GTNP,KKMW,Lek,LamTh,MW1,St} for Gaussian behavior, and \cite{BBGILMT} for recent work on the distribution of gaps between summands.

An alternative definition of the Fibonacci sequence can be framed in terms of the Zeckendorf non-consecutive condition: The Fibonacci sequence (beginning $F_1=1$, $F_2=2$) is the unique increasing sequence of natural numbers such that every positive integer can be written uniquely as a sum of non-consecutive terms from the sequence. This is a special case of our results (described below). The condition that no two terms in the decomposition may be consecutive is the notion of legal decomposition in the case of Zeckendorf decompositions. In this paper, we encode notions of legal decomposition by a function $f:\N_0\to\N_0$.

\begin{defi} Given a function $f:\N_0\to\N_0$, a sum $x = \sum_{i=0}^k a_{n_i}$ of terms of $\{a_n\}$ is an \emph{$f$-decomp\-osition of $x$ using $\{a_n\}$} if for every $a_{n_i}$ in the $f$-decomposition, the previous $f(n_i)$ terms ($a_{n_i-f(n_i)}$, $a_{n_i-f(n_i)+1}$, $\dots$, $a_{n_i-1}$) are not in the $f$-decomposition. \end{defi}

We prove the following theorems about $f$-decompositions.

\begin{thm}\label{thm:mainfdecompexists} For any $f:\N_0\to\N_0$, there exists a sequence of natural numbers $\{a_n\}$ such that every positive integer has a unique legal $f$-decomposition in $\{a_n\}$.
\end{thm}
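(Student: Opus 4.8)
The plan is to build the sequence greedily and prove by induction a strong invariant: after the first $n+1$ terms $a_0,\dots,a_n$ have been chosen, the values realized by legal $f$-decompositions using only terms among $\{a_0,\dots,a_n\}$ are exactly the integers in an initial interval $[0,M_n]$, each with a \emph{unique} such decomposition (the value $0$ coming from the empty sum). I then define the next term to be $a_{n+1}=M_n+1$, the smallest positive integer not yet represented, with base case $a_0=1$ and the bookkeeping convention $M_j=0$ (equivalently $a_j=1$) for $j<0$ to streamline the boundary indices.

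For the inductive step I would partition the legal $f$-decompositions using $\{a_0,\dots,a_{n+1}\}$ according to whether or not they use $a_{n+1}$. Those that do not use $a_{n+1}$ are precisely the decompositions counted at the previous stage, and realize $[0,M_n]=[0,a_{n+1}-1]$ uniquely. For those that do use $a_{n+1}$, the definition of an $f$-decomposition forbids the $f(n+1)$ immediately preceding terms, so the remaining summands must have indices in $I=\{0,1,\dots,n-f(n+1)\}$. The key lemma here is a bijection: appending $a_{n+1}$ to any legal decomposition $D$ that uses only indices in $I$ again yields a legal decomposition, and every legal decomposition using $a_{n+1}$ arises this way. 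Legality is preserved because $a_{n+1}$ lies after all summands of $D$, so it violates no earlier constraint, and the forbidden window $\{a_{n+1-f(n+1)},\dots,a_n\}$ for $a_{n+1}$ is disjoint from $I$ by construction. Since $D$ ranges exactly over $[0,M_{n-f(n+1)}]$ by the inductive hypothesis, the decompositions using $a_{n+1}$ realize exactly the interval $[a_{n+1},\,a_{n+1}+M_{n-f(n+1)}]$, each uniquely.

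The two intervals $[0,a_{n+1}-1]$ and $[a_{n+1},\,a_{n+1}+M_{n-f(n+1)}]$ abut with no gap and no overlap — this is exactly where the greedy choice $a_{n+1}=M_n+1$ is used — so their union is the initial interval $[0,M_{n+1}]$ with $M_{n+1}=a_{n+1}+M_{n-f(n+1)}$, and every integer in it has a unique representation: uniqueness within each piece comes from the hypothesis, and the two pieces are separated by the threshold $a_{n+1}$. This closes the induction and, writing $M_j=a_{j+1}-1$, simultaneously exhibits the recurrence $a_{m+1}=a_m+a_{m-f(m)}$ that anticipates the later results. When $f(n+1)>n$ the index set $I$ is empty, forcing $D$ to be the empty decomposition; the boundary conventions make this degenerate case fit the same formula, and it is the step I would treat most carefully.

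To pass from finite initial segments to the full claim, I would note that $M_n$ is strictly increasing (since $M_{n+1}\ge a_{n+1}=M_n+1$) and hence unbounded, so every positive integer $x$ lies in some $[0,M_n]$. Because the $a_j$ are positive and increasing, any legal $f$-decomposition of $x$ can use only terms not exceeding $x$, i.e.\ terms among $\{a_0,\dots,a_n\}$; thus the decomposition of $x$ must be the unique one already identified at stage $n$. This yields existence and uniqueness of the $f$-decomposition for every positive integer at once. I expect the principal obstacle to be formulating and verifying the legality-preserving bijection obtained by appending $a_{n+1}$, together with the careful handling of the degenerate boundary cases in which $f(n+1)$ exceeds the number of available earlier terms.
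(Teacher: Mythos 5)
Your proposal is correct and is essentially the paper's own argument: your greedy choice $a_{n+1}=M_n+1$ recovers exactly the recurrence $a_{n+1}=a_n+a_{n-f(n)}$ used in the paper (with the same boundary convention $a_j=1$ for $j<0$), and your partition of decompositions according to whether they contain $a_{n+1}$, together with the appending bijection, is precisely the paper's inductive step showing that the interval $[a_m,a_{m+1})$ is tiled by adding $a_m$ to the decompositions supported on indices at most $m-f(m)-1$ (your degenerate case $f(n+1)>n$ being the paper's Case I). The only organizational difference is that you fold uniqueness into the same interval invariant, whereas the paper proves it separately via the localization lemma $\sum_{i\geq h} a_{n_i}\in[a_{n_h},a_{n_h+1})$ and a first-index-of-disagreement argument --- the same underlying fact, so this does not constitute a genuinely different route.
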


\begin{thm}\label{thm:mainfdcompunique} Let $f:\N_0\to\N_0$ be a function, and let $\{a_n\}$ be a sequence where $a_n = 1$ for $n<1$ and $a_n = a_{n-1}+a_{n-1-f(n-1)}$ for $n\ge 1$.  Then $\{a_n\}_{n=0}^\infty$ is the only increasing sequence of natural numbers in which every positive integer has a unique legal $f$-decomposition.
\end{thm}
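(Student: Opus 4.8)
The plan is to prove Theorem~\ref{thm:mainfdcompunique} by establishing two separate facts: first, that the specific sequence $\{a_n\}$ defined by the recurrence $a_n = a_{n-1} + a_{n-1-f(n-1)}$ does give every positive integer a unique $f$-decomposition (existence of a working sequence); and second, that \emph{any} increasing sequence of natural numbers with the unique-decomposition property must coincide with this $\{a_n\}$ (uniqueness of the sequence). Since Theorem~\ref{thm:mainfdecompexists} already guarantees that some sequence works, the real content here is pinning down exactly which sequence, and showing it is forced term-by-term.

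For the uniqueness of the sequence, I would argue by strong induction on $n$, proving that $a_n$ is completely determined by the requirement that every positive integer have a unique $f$-decomposition. The key observation is that $a_0$ must equal $1$: the integer $1$ must have a decomposition, and the smallest term available is $a_0$, so $a_0 = 1$ is forced (any larger value leaves $1$ undecomposable). More generally, suppose $a_0, \dots, a_{n-1}$ are already pinned down. I would consider the largest integer that can be written legally using only $a_0, \dots, a_{n-1}$ — call it $M_{n-1}$ — and argue that $M_{n-1} + 1$ cannot be represented without introducing a new term, which forces $a_n = M_{n-1} + 1$. The crux is computing $M_{n-1}$ and showing it equals $a_n - 1$ where $a_n$ is given by the stated recurrence. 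Here the $f$-decomposition rule enters: the largest legal sum ending in $a_{n-1}$ includes $a_{n-1}$ plus the largest legal sum among $a_0, \dots, a_{n-2-f(n-1)}$ (since the $f(n-1)$ terms immediately before $a_{n-1}$ are blocked), which telescopes to give the recurrence $a_n = a_{n-1} + a_{n-1-f(n-1)}$.

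To make the ``largest representable integer'' argument rigorous, I would introduce the greedy-type quantity $S_n := \sum$ of a maximal legal selection from $\{a_0, \dots, a_n\}$ and prove by induction that $S_n = a_{n+1} - 1$, equivalently that the integers $\{1, 2, \dots, a_{n+1}-1\}$ are exactly those representable using terms up to $a_n$. The forward direction (every such integer is representable) can be handled by a greedy decomposition argument; the reverse direction (nothing larger is representable, and the representation is unique) uses the legality constraints to bound sums. The boundary convention $a_n = 1$ for $n < 1$ needs to be checked to make the recurrence well-defined at small indices, and I would verify the base cases of the induction explicitly.

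The main obstacle I anticipate is the uniqueness half of the decomposition statement, as opposed to mere existence: showing that no positive integer admits two distinct legal $f$-decompositions in this sequence. Existence follows from a greedy algorithm, but uniqueness requires a careful argument that the greedy choice is forced at each stage — specifically, that if a decomposition of $m$ uses a term $a_k$, then $a_k$ must be the largest term not exceeding $m$, and that omitting it would make the remaining terms (constrained by the $f$-legality blocking) sum to too little to reach $m$. This is where the precise value of $M_{k-1} = a_k - 1$ does the heavy lifting, since it shows that any legal sum avoiding $a_k$ and all larger terms is at most $a_k - 1 < m$. I expect this forcing argument, combined with correctly tracking which earlier terms are blocked by the $f(n_i)$ condition, to be the technically delicate part, while the sequence-uniqueness reduction is comparatively routine once the decomposition facts are in hand.
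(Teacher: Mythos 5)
Your proposal is correct and takes essentially the same route as the paper: the paper likewise forces $a'_0 = 1$ and then inducts term by term, ruling out $a'_m < a_m$ because $a'_m$ would then have two legal $f$-decompositions (itself alone, and one built from the earlier terms) and ruling out $a'_m > a_m$ because the integer $a_m$ would then have no decomposition at all. Your quantity $M_{n-1} = a_n - 1$ and the representability/maximal-sum facts you plan to re-derive via greedy and telescoping arguments are precisely the statements already established in the proof of Theorem~\ref{thm:mainfdecompexists} (every $x \in [0, a_m)$ is representable using $\{a_n\}_{n=0}^{m-1}$, and any legal sum with largest index $n_1$ lies in $[a_{n_1}, a_{n_1+1})$), which the paper simply cites rather than re-proving.
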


As we study various such sequences, it is useful to give them a name.

\begin{defi}\label{def:fseq}
Given $f:\N_0\to\N_0$, the associated \emph{$f$-sequence} is the unique increasing sequence of natural numbers $\{a_n\}$ such that every positive integer has a unique $f$-decomposition.
\end{defi}

If we let $f$ be the constant function $f(n)=1$ for all $n\in\N_0$, we get the Zeckendorf condition\footnote{We say $f(0)=1$ only for notational convenience. Note that it is redundant as there are no terms before $a_0$ in the sequence.} that consecutive terms of the sequence may not be used. Hence the Fibonacci numbers are the $f$-sequence associated with the constant function $f(n)=1$ for all $n\in\N_0$.

The Fibonacci sequence is a solution to a recurrence relation. For certain $f$, we can prove similar connections between $f$-sequences and linear recurrence relations.

\begin{thm}\label{thm:mainfseqrecurrence} If $f(n)$ is periodic, then the associated $f$-sequence $\{a_n\}$ is described by a linear recurrence relation.\end{thm}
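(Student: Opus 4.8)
The plan is to convert the period-$p$, variable-coefficient recurrence of Theorem~\ref{thm:mainfdcompunique} into a constant-coefficient one by passing to a transfer-matrix (monodromy) formulation and invoking the Cayley--Hamilton theorem. First I would record that a periodic $f$ is bounded: writing $p$ for a period and $M=\max_{n}f(n)<\infty$, the defining relation $a_n=a_{n-1}+a_{n-1-f(n-1)}$ ever only looks back $M+1$ steps. I would then package the sequence into state vectors $v_n=(a_n,a_{n-1},\dots,a_{n-M})^{\mathsf T}\in\mathbb{Z}^{M+1}$ and write $v_n=A_n v_{n-1}$, where $A_n$ is the companion-type matrix with $1$'s on the subdiagonal and, in its top row, a $1$ in column $0$ together with a $1$ in column $f(n-1)$ (these coincide, giving a $2$, exactly when $f(n-1)=0$). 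The only place $A_n$ depends on $n$ is through $f(n-1)$, so by periodicity $A_n$ depends only on $n\bmod p$; in particular there are finitely many distinct transfer matrices, all with integer entries.

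Next I would iterate over a full period. For $n\equiv r\pmod p$ we have $v_{n+p}=P_r v_n$ with the monodromy matrix $P_r=A_{r+p}A_{r+p-1}\cdots A_{r+1}$, and by periodicity $A_{k+p}=A_k$ this product is one fixed $(M+1)\times(M+1)$ integer matrix for each residue $r\in\{0,1,\dots,p-1\}$. Thus each residue subsequence $\{v_{r+kp}\}_{k\ge 0}$ is the orbit of $v_r$ under powers of the single matrix $P_r$. By the Cayley--Hamilton theorem $P_r$ annihilates its own monic integer characteristic polynomial $\chi_r$, and since $P_r^{k}v_r=v_{r+kp}$ this yields, after reading off the first coordinate, a constant-coefficient linear recurrence in steps of $p$ for the scalar subsequence $\{a_{r+kp}\}_k$.

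To obtain a single recurrence valid for the whole sequence I would clear the dependence on $r$. Set $\Psi(x)=\prod_{r=0}^{p-1}\chi_r(x)$, a monic integer polynomial of degree $d:=p(M+1)$; since $\chi_r\mid\Psi$ we have $\Psi(P_r)=0$ for every $r$. Writing $\Psi(x)=\sum_{j=0}^{d}b_j x^{d-j}$ with $b_0=1$, the identity $\Psi(P_r)P_r^{k}=0$ applied to $v_r$ gives $\sum_{j=0}^{d}b_j\,v_{r+(k+d-j)p}=0$ for all $k\ge 0$ and all $r$; taking first coordinates and letting $m=r+(k+d)p$ range over all sufficiently large integers collapses this to the single constant-coefficient linear recurrence
\begin{equation}
a_m \ = \ -\sum_{j=1}^{d}b_j\,a_{m-jp},
\end{equation}
valid for all large $m$. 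This exhibits $\{a_n\}$ as the solution of a linear recurrence with integer (possibly negative) coefficients, as desired. I would finally remark that when every $A_n$ is invertible the monodromy matrices $P_r$ are cyclic conjugates of one another, so they share a common characteristic polynomial and one may take $\Psi$ of the smaller degree $M+1$.

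The step I expect to be the main obstacle is the passage from the per-residue recurrences to one recurrence for the entire sequence: a priori the characteristic polynomials $\chi_r$ could differ across residues, and the transfer matrices need not be invertible (column $M$ of $A_n$ vanishes whenever $f(n-1)<M$), so the clean conjugacy argument that would force the $\chi_r$ to coincide is not always available. Taking the product $\Psi=\prod_r\chi_r$ sidesteps this at the cost of a larger order, and the remaining work is the bookkeeping needed to verify that the combined relation genuinely holds for every residue class and hence for all large $m$.
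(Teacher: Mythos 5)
Your proposal is correct, and it proves the theorem by a genuinely different route than the paper. The paper works directly with the defining relations $a_{n+1}-a_n-a_{n-f(n)}=0$, encoded as vectors in $\mathbb{R}^{b^2+1}$ (with $b$ a multiple of the period chosen so that $b\ge f(n)+1$ for all $n$), and runs an explicit elimination algorithm: it sweeps the ``bad coordinates'' rightward block by block, extracts $b$ truncated vectors $\vec u_0,\dots,\vec u_{b-1}$ of dimension $b-1$, and uses their forced linear dependence to produce, for each residue class mod $b$, a recurrence of the form \eqref{eqn:recrelsinslots}; it then invokes Lemma \ref{lem:interlacedsequences} (interlacing of linearly recurrent sequences) to reassemble the full sequence, since the per-residue recurrences need not agree. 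You instead pass to the companion/transfer-matrix formulation $v_n=A_nv_{n-1}$, form the monodromy matrices $P_r$ over a full period, and apply Cayley--Hamilton; your product polynomial $\Psi=\prod_r\chi_r$ then annihilates every $P_r$ simultaneously, and this is exactly what replaces the interlacing lemma --- the resulting relation $a_m=-\sum_{j=1}^d b_j\,a_{m-jp}$ is already uniform in the residue class. Your restriction to ``all large $m$'' is appropriate and costs nothing: the state-vector relation only holds once all indices involved are nonnegative (or one adopts the paper's convention $a_j=1$ for $j<0$), and the paper's own argument likewise only yields an eventually-valid recurrence. Your route is shorter and more conceptual, and gives the explicit order bound $p^2(M+1)$ (in steps of $p$); the paper's elimination algorithm is more hands-on and is what Appendix \ref{appendix:exampleoflinearrec} illustrates concretely. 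One small remark: your closing caveat is more cautious than necessary, since for square matrices $AB$ and $BA$ always share the same characteristic polynomial, with no invertibility hypothesis; hence the cyclic products $P_r$ all have a common $\chi$, and you could always take $\Psi=\chi_0$ of degree $M+1$ --- though this optimization is immaterial to the correctness of your argument.
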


In Sections \ref{section:radixandfactorial} and \ref{section:3bingaussian} we consider various kinds of $f$-decompositions and study the distribution of the number of summands in the $f$-decomposition of integers picked in an interval. We find that these distributions converge in distribution\footnote{While we work with moment generating functions, since these functions converge well we could multiply the arguments by $i=\sqrt{-1}$ and obtain convergence results about the characteristic functions. By showing the moment generating functions converge pointwise, by the L\'evy continuity theorem we obtain convergence in distribution.} to a normal distribution for suitable growing intervals, which we now describe.

\begin{defi}  Let $f:\N_0\to\N_0$ be the function defined as \begin{equation} \{f(n)\} = \{\underbracket{0,}\underbracket{0,1,}\underbracket{0,1,2,}\underbracket{0,1,2,3,}\dots\},\end{equation} where each ``bin'' is one term wider than the previous, each bin begins with 0, and $f$ increases by exactly 1 within bins.
% \footnote{Thus $f(0) = 0$, $f(1) = 0$, $f(2) = 1$, $f(3) = 0$, $f(4) = 1$, $f(5) = 2$, $f(6) = 0$ and so on.}
We say that the $f$-decomposition of $x \in \N$ for the function $f$ above is the \emph{Factorial Number System Representation} of the natural number $x$.
\end{defi}

\begin{thm}\label{thm:factorialgaussian}
Let the random variable $X_n$ denote the number of summands in the Factorial Number System Representation of an integer picked randomly from $\left[0, (n+1)!\right)$ with uniform probability. If we normalize $X_n$ as $X_n'$, so that $X_n'$ has mean 0 and variance 1, then $X_n'$ converge in distribution to the standard normal distribution as $n\to\infty$.
\end{thm}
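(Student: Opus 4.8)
The plan is to reduce the statement to a sum of independent Bernoulli random variables and then apply a central limit theorem for non-identically distributed summands. First I would use Theorem \ref{thm:mainfdcompunique} to write down the associated $f$-sequence explicitly: a direct induction on $a_n = a_{n-1}+a_{n-1-f(n-1)}$ shows that for each $k \ge 1$ the $k$-th bin consists of the $k$ terms $1\cdot k!,\, 2\cdot k!,\, \dots,\, k\cdot k!$, and that $(k+1)\cdot k! = (k+1)!$ opens the next bin. The legality condition then forces at most one summand from each bin: if the term $j\cdot k!$ sits at position $j$ within the $k$-th bin, then $f$ equals $j-1$ there, so selecting it forbids exactly the $j-1$ earlier terms of that same bin (and never reaches into the previous bin). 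Hence an $f$-decomposition selects, in bin $k$, either nothing or a single term $d_k\cdot k!$ with $d_k\in\{1,\dots,k\}$, which is precisely the factorial-base expansion $x=\sum_{k\ge 1} d_k\cdot k!$ with $0\le d_k\le k$. The upshot is that the number of summands $X_n$ equals the number of nonzero digits among $d_1,\dots,d_n$.

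Next I would translate the uniform choice of $x$ into a statement about the digits. Since $\prod_{k=1}^n (k+1) = (n+1)!$, the map sending $x\in[0,(n+1)!)$ to its digit string $(d_1,\dots,d_n)$ is a bijection onto $\prod_{k=1}^n\{0,1,\dots,k\}$; consequently, picking $x$ uniformly makes the digits $d_k$ independent and uniform on $\{0,\dots,k\}$. Writing $B_k=\mathbf{1}[d_k\neq 0]$, we obtain $X_n=\sum_{k=1}^n B_k$ as a sum of independent Bernoulli variables with $\mathbb{P}(B_k=1)=k/(k+1)$. A short computation then gives $\E[X_n]=\sum_{k=1}^n \frac{k}{k+1}=n+1-H_{n+1}$, where $H_{n+1}$ is the harmonic number, and $\mathrm{Var}(X_n)=\sum_{k=1}^n \frac{k}{(k+1)^2}$, which grows like $\log n$ and in particular tends to infinity.

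Finally I would establish the Gaussian limit. Because the $B_k$ are independent, bounded, and their variances sum to infinity, I would verify Lyapunov's condition with exponent $2+\delta=3$: a direct estimate gives $\E|B_k-p_k|^3=\frac{k(1+k^2)}{(k+1)^4}$, so $\sum_{k=1}^n \E|B_k-p_k|^3$ grows like $\log n$ while $\mathrm{Var}(X_n)^{3/2}$ grows like $(\log n)^{3/2}$; the Lyapunov ratio is therefore of order $(\log n)^{-1/2}\to 0$. Equivalently, and in the spirit of the moment-generating-function approach used elsewhere in the paper, one can compute $\E[e^{tX_n}]=\prod_{k=1}^n \frac{1+ke^t}{k+1}$ directly, substitute $t\mapsto t/\sigma_n$ after centering, and expand the logarithm to show the normalized MGF converges to $e^{t^2/2}$; the L\'evy continuity theorem then yields convergence in distribution of $X_n'$ to the standard normal.

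The main obstacle is the slow, logarithmic growth of the variance: unlike the positive-linear-recurrence setting where the variance grows linearly in $n$, here $\sigma_n^2\sim \log n$, so one must check that the error terms in the Lyapunov (or MGF) estimate genuinely decay despite this weak normalization. The bijective correspondence between uniform integers and independent factorial digits, which decouples the digit statistics, is what makes the argument go through; verifying it carefully, including the at-most-one-summand-per-bin legality claim, is the other place where care is needed.
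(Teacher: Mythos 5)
Your proposal is correct and takes essentially the same approach as the paper: both decompose $X_n$ into a sum of independent per-bin indicator variables and verify the Lyapunov condition over a variance of order $\log n$ (you with $\delta=1$, the paper with $\delta=2$, an immaterial difference). Your explicit digit-string bijection and induction on the bin structure simply make rigorous the independence claim that the paper states more briefly.
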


The above theorem immediately implies the well-known result that the Stirling numbers of the first kind are asymptotically normally distributed (see Corollary \ref{cor:unsignedstirlinggauss}).

\begin{defi}
        Let $f:\N_0\to\N_0$ be a periodic function defined by\footnote{Again, $f(0)=1$ for convenience.} \begin{equation} \{f(n)\}\ =\  \{\underbracket{1,1,2,\dots,b-1,}\underbracket{1,1,2,\dots,b-1,}\dots\}.\end{equation} Let $\{a_n\}$ be the $f$-sequence that corresponds to this function $f$. We say that the $f$-decomposition of $x \in \N$ for the function $f$ above is the \emph{$b$-bin representation} of $x$.
\end{defi}

\begin{thm}\label{thm:gaussianbehaviorbbindecomp}
Let the random variable $X_n$ denote the number of summands in the $b$-bin representation of an integer picked at random from $[0, a_{bn})$ with uniform probability. Normalize $X_n$ as $Y_n = (X_n-\mu_n)/\sigma_n$, where $\mu_n$ and $\sigma_n$ are the mean and variance of $X_n$ respectively. If $b\geq 3$, $Y_n$ converges in distribution to the standard normal distribution as $n\to\infty$.
\end{thm}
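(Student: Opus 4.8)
The plan is to convert the problem into counting constrained configurations of ``bins,'' encode these counts in a single generating polynomial whose recurrence is governed by a $2\times 2$ transfer matrix, and then run the standard dominant-root / moment-generating-function argument. First I would use the uniqueness of $f$-decompositions (Theorems~\ref{thm:mainfdecompexists} and~\ref{thm:mainfdcompunique}) to put the integers in $[0,a_{bn})$ in bijection with the legal $b$-bin configurations built from the first $n$ bins $a_0,\dots,a_{bn-1}$. I then analyze the local structure. Labelling the positions inside a bin $0,1,\dots,b-1$ with $f$-values $1,1,2,\dots,b-1$, using any position $j\ge 2$ forbids \emph{all} earlier positions of that bin, so each bin can contribute at most one summand and its legal states are exactly $\emptyset,\{0\},\{1\},\dots,\{b-1\}$. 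Moreover the only constraint relating two bins is that using position $0$ of a bin forbids position $b-1$ of the previous bin. Hence $X_n$ is the number of non-empty bins in a length-$n$ string of bin-states that avoids the single adjacency ``state $b-1$ followed by state $0$.''

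\emph{Generating function and recurrence.} Let $p_{n,k}$ count such configurations with exactly $k$ non-empty bins and set $P_n(x)=\sum_k p_{n,k}x^k$, so that $P_n(1)=a_{bn}$ and the moment generating function of $X_n$ is $M_n(t)=P_n(e^t)/P_n(1)$. Splitting on whether the last bin is in state $b-1$, I track $A_n(x)$ (strings ending in state $b-1$) and $B_n(x)$ (all others); the forbidden adjacency yields the linear system with transfer matrix
\[ N(x)\ =\ \begin{pmatrix} x & x \\ 1+(b-2)x & 1+(b-1)x \end{pmatrix}, \]
whose characteristic polynomial is $\lambda^2-(1+bx)\lambda+x^2=0$ (consistent with the constant-coefficient recurrence provided by Theorem~\ref{thm:mainfseqrecurrence}). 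Its roots are $\lambda_{1,2}(x)=\tfrac12\big((1+bx)\pm\sqrt{(1+bx)^2-4x^2}\big)$ with $\lambda_1\lambda_2=x^2$; since the discriminant stays positive for $x$ near $1$, the root $\lambda_1(x)$ is real, simple, strictly dominant and analytic there, giving $P_n(x)=c_1(x)\lambda_1(x)^n+c_2(x)\lambda_2(x)^n$.

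\emph{Mean, variance and convergence.} Writing $g(t)=\log\lambda_1(e^t)$, strict dominance gives $\log M_n(t)=n\big(g(t)-g(0)\big)+O(1)$ uniformly for $t$ near $0$, whence $\mu_n=g'(0)\,n+O(1)$ and $\sigma_n^2=g''(0)\,n+O(1)$. For $Y_n=(X_n-\mu_n)/\sigma_n$ one has $\E\!\left[e^{tY_n}\right]=e^{-t\mu_n/\sigma_n}M_n(t/\sigma_n)$; Taylor-expanding $g$ about $0$, the linear term cancels by centering, the quadratic term contributes $t^2/2$ by the variance normalization, and the $j$-th order term is $O(n/\sigma_n^{\,j})=O(n^{1-j/2})\to 0$ for $j\ge 3$. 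Thus $\E[e^{tY_n}]\to e^{t^2/2}$, and the L\'evy continuity theorem yields convergence in distribution to the standard normal.

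\emph{Main obstacle.} The delicate points are analytic rather than combinatorial. I must show that $\lambda_1(x)$ is a simple, strictly dominant root with $c_1(x)\neq 0$ \emph{uniformly} on a neighborhood of $x=1$, so that the subdominant term and the prefactors are genuinely absorbed into the $O(1)$ error of $\log M_n$, and I must verify the nondegeneracy $g''(0)>0$, obtained by differentiating $\lambda^2-(1+bx)\lambda+x^2=0$ twice at $x=1$, so that $\sigma_n\to\infty$ and the limit is a genuine Gaussian rather than a point mass. The hypothesis $b\ge 3$ is used to secure this positive variance growth in the regime of interest; the excluded value $b=2$ collapses to the constant function $f\equiv 1$, i.e.\ the classical Zeckendorf/Fibonacci decomposition, whose Gaussian behavior is already known.
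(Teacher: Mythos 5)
Your proposal is correct, and its analytic core is the same as the paper's: you arrive at exactly the same bivariate structure (your characteristic polynomial $\lambda^2-(1+bx)\lambda+x^2$ is precisely the paper's recurrence $p_{n,k}=p_{n-1,k}+bp_{n-1,k-1}-p_{n-2,k-2}$ read as $P_n(x)=(1+bx)P_{n-1}(x)-x^2P_{n-2}(x)$, and your discriminant $(1+bx)^2-4x^2=(b^2-4)x^2+2bx+1$ is the paper's), and the endgame --- dominant root, normalized moment generating function, pointwise convergence of $\log M_{Y_n}(t)$ to $t^2/2$, L\'evy continuity --- is identical. Where you genuinely differ is in how the recurrence is produced and how the mean and variance are extracted. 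The paper derives the recurrence by direct inclusion--exclusion on the top bin (the term $-p_{n-2,k-2}$ subtracts the configurations using the last element of one bin and the first of the next), then solves for a closed-form $g_n(y)$ via partial fractions and computes $\mu_n=g_n'(1)/g_n(1)$ and $\sigma_n^2$ \emph{explicitly} as rational expressions in $b$ linear in $n$. You instead split on the state of the last bin and use a $2\times 2$ transfer matrix (which, as I verified, has trace $1+bx$ and determinant $x^2$, so it is the same object), and you read off $\mu_n=g'(0)n+O(1)$, $\sigma_n^2=g''(0)n+O(1)$ from the quasi-power form $P_n(x)=c_1(x)\lambda_1(x)^n+c_2(x)\lambda_2(x)^n$. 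Your route is cleaner and avoids the paper's bulky closed-form variance computation, but it shifts the burden onto the analytic points you flagged: differentiating the uniform asymptotic $\log M_n(t)=n(g(t)-g(0))+O(1)$ to get the mean and variance is not automatic and needs analyticity of $\lambda_1$ and $c_1\neq 0$ on a complex neighborhood of $x=1$ plus Cauchy estimates (i.e., Hwang's quasi-powers framework); the paper sidesteps this entirely by computing $\mu_n$ and $\sigma_n^2$ exactly before touching the MGF.

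One small correction: the hypothesis $b\ge 3$ is not what secures $g''(0)>0$. The paper's variance coefficient is $(b^2+b-4)/(b^2+2b-3)^{3/2}$, which is already positive at $b=2$ (it equals $2/5^{3/2}$, consistent with the known Fibonacci/Zeckendorf variance). The restriction $b\ge 3$ enters in the \emph{definition} of $b$-bin decompositions and in the paper's convenience claim that $x_1(y)\neq x_2(y)$ for all $b>2$ and $y\ge 0$; as you correctly note, $b=2$ collapses $f(n)=\max\{1,n\bmod b\}$ to $f\equiv 1$, the classical Zeckendorf case, which is handled by earlier work. This mislabeling does not affect the validity of your argument for the stated theorem, since $b\ge 3$ is assumed there.
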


%%%%%%%%%%%%%%%%%%%%%%%%%%%%%%%%%%%%%%%%%%%%%%%%%%%%%%%%%%%%%%%%%%%%%%%%%%%%%%%%%%%%%%%%%%%%%%%%%%%%%%%%%%%%%%%%%%%%%%%%%%%%%%%%%%%%%%%%%%%%%%%%%%%%
%%%%%%%%%%%%%%%%%%%%%%%%%%%%%%%%%%%%%%%%%%%%%%%%%%%%%%%%%%%%%%%%%%%%%%%%%%%%%%%%%%%%%%%%%%%%%%%%%%%%%%%%%%%%%%%%%%%%%%%%%%%%%%%%%%%%%%%%%%%%%%%%%%%%
%%%%%%%%%%%%%%%%%%%%%%%%%%%%%%%%%%%%%%%%%%%%%%%%%%%%%%%%%%%%%%%%%%%%%%%%%%%%%%%%%%%%%%%%%%%%%%%%%%%%%%%%%%%%%%%%%%%%%%%%%%%%%%%%%%%%%%%%%%%%%%%%%%%%
%%%%%%%%%%%%%%%%%%%%%%%%%%%%%%%%%%%%%%%%%%%%%%%%%%%%%%%%%%%%%%%%%%%%%%%%%%%%%%%%%%%%%%%%%%%%%%%%%%%%%%%%%%%%%%%%%%%%%%%%%%%%%%%%%%%%%%%%%%%%%%%%%%%%
\section{Constructing $f$-Sequences}

\subsection{Existence and Uniqueness Results}

Our proof of Theorem \ref{thm:mainfdecompexists} is constructive, and for any $f:\N_0\to\N_0$ gives a sequence $\{a_n\}$ such that every positive integer has a unique $f$-decomposition using $\{a_n\}$. This is an analogue to Zeckendorf's Theorem \cite{Ze}.

\begin{proof}[Proof of Theorem \ref{thm:mainfdecompexists}]
Let $a_0 = 1$. For $n\geq1$, define
\begin{align}a_n = a_{n-1} + a_{n-1-f(n-1)},\label{eqn:generatesequencefromf}\end{align} where $a_n$ may be assumed to be 1 when $n<0$. Notice that $\{a_n\}$ is a strictly increasing sequence. We use this definition of the sequence to show that all integers in $[a_m, a_{m+1})$ have an $f$-decomposition in $\{a_n\}_{n=0}^m$ for all $m \in \N$. We proceed by induction.

The sequence always begins $a_0 = 1, a_1=2$. Thus all integers in $[a_0, a_1)$ can be legally decomposed in $\{a_n\}$. For $m>0$, recall that $a_{m+1} = a_{m} + a_{m-f(m)}$.\\ \

\textbf{Case I:} If $m-f(m)<0$, we have $a_{m+1} = a_m + 1$. Therefore $[a_m, a_{m+1}) = \{a_m\}$ and $a_m$ has an $f$-decomposition.\\ \

\textbf{Case II:} If $m-f(m) \geq 0$, consider any $x \in [a_m, a_{m+1}) = [a_m, a_m + a_{m-f(m)})$. Therefore $x - a_m \in [0, a_{m-f(m)})$. By the induction hypothesis, $x-a_m$ has an $f$-decomposition in $\{a_n\}_{n=0}^{m-f(m)-1}$. Since no terms from $\{a_n\}_{n=m-f(m)}^{m-1}$ are in the $f$-decomposition of $x-a_m$, we may use this $f$-decomposition and add $a_m$ to get $x$. \\ \

We now prove uniqueness of $f$-decompositions. Let $S = \sum_{i=1}^k a_{n_i}$ be an $f$-decomposition with $n_1 > n_2 > \dots > n_k$. We first show that $S \in [a_{n_1}, a_{n_1 +1})$. It is clear that $S \geq a_{n_1}$. We show by induction on $n_1$ that $S < a_{n_1+1}$. If $n_1 = 0$, then $S = a_0 = 1$. If $n_1 \geq 1$, then $S = a_{n_1} + \sum_{i = 2}^k a_{n_i}$. By the induction hypothesis, $\sum_{i=2}^k a_{n_i} < a_{n_2 + 1}$. We know from our notion of $f$-decomposition that $n_2 \leq n_1 - f(n_1)- 1$. Since $\{a_n\}$ is increasing, $\sum_{i=2}^k a_{n_i} < a_{n_1-f(n_1)}$. This gives us $S = \sum_{i=1}^k a_{n_i} < a_{n_1} + a_{n_1 - f(n_1)} = a_{n_1 + 1}$.

Consider two $f$-decompositions $x=\sum_{i=1}^k a_{n_i} = \sum_{j=1}^l a_{m_j}$ for the same positive integer $x$ with $n_1 > n_2 > \dots > n_k$ and $m_1 > m_2 > \dots > m_l$. Assume for the sake of contradiction that $\{n_1, n_2, \dots, n_k\} \neq \{m_1, m_2, \dots, m_l\}$. Let $h$ be the smallest natural number such that $n_h \neq m_h$ (it is clear that such an $h$ exists with $h\leq k$ and $h \leq l$). We have $\sum_{i=1}^{h-1} a_{n_i} = \sum_{j=1}^{h-1} a_{m_j}$. Thus $\sum_{i=h}^k a_{n_i} = \sum_{j=h}^l a_{m_j}$. However, as shown above, $\sum_{i=h}^k a_{n_i} \in [a_{n_h}, a_{n_h + 1})$ and $\sum_{j=h}^l a_{m_j} \in [a_{m_h}, a_{m_h + 1})$, which are disjoint intervals. Therefore $\sum_{i=1}^k a_{n_i} \neq \sum_{j=1}^l a_{m_j}$, a contradiction.
\end{proof}

Theorem \ref{thm:mainfdecompexists} gives us a construction for $\{a_n\}$. Theorem \ref{thm:mainfdcompunique} tells us that $\{a_n\}$ is the only increasing sequence of natural numbers for a given $f(n)$. As mentioned in Definition \ref{def:fseq}, we call this sequence the \emph{$f$-sequence}.

\begin{proof}[Proof of Theorem \ref{thm:mainfdcompunique}]
We proceed by induction. Let $\{a'_n\}$ be an increasing sequence such that every positive has a unique $f$-decomposition using $\{a_n\}$. Since 1 cannot be written as a sum of other positive integers, we require $a'_0 = a_0 = 1$.  Now suppose that $a'_i = a_i$ for each $0\le i\le m-1$.  As shown in the proof of Theorem \ref{thm:mainfdecompexists}, each $x\in[0,a_m)$ has a unique $f$-decomposition in $\{a'_n\}_{n=0}^{m-1}$.  Thus, $a'_m \ge a_m$; otherwise, it would not have a unique $f$-decomposition.  On the other hand, if $a'_m > a_m$, then the integer with value $a_m$ does not have an $f$-decomposition.  Thus, $a'_m = a_m$.
\end{proof}

\subsection{Linear Recurrences and Periodic $f$}

Now that we have the unique increasing sequence for a given $f:\N_0\to\N_0$, we prove Theorem \ref{thm:mainfseqrecurrence}, which says that this sequence satisfies a linear recurrence relation if $f$ is periodic. The following lemma is a key ingredient in the proof; see for example \cite{LaTa} for a proof.

\begin{lem}
Let $a_{1,n}$, $a_{2, n}$, \dots, $a_{b, n}$ be linearly recurrent sequences, whose recurrence relations need not be equal. Then the sequence $\{a_n\}_{n=0}^\infty$ constructed by interlacing the sequences as $a_{1,1}$, $a_{2,1}$, \dots, $a_{b, 1}$, $a_{1, 2}$, $a_{2, 2}$, \dots is also linearly recurrent. \label{lem:interlacedsequences}
\end{lem}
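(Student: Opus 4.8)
The plan is to use the standard characterization that a sequence $(c_n)$ is linearly recurrent if and only if it is annihilated by some nonzero polynomial in the shift operator $\sigma$, where $(\sigma c)_n = c_{n+1}$; that is, there is a nonzero $p(x) = x^d - \gamma_{d-1}x^{d-1} - \cdots - \gamma_0$ with $p(\sigma)(c) = 0$, which is exactly the recurrence $c_{n+d} = \gamma_{d-1}c_{n+d-1} + \cdots + \gamma_0 c_n$. First I would fix an indexing for the interlaced sequence $\{a_n\}$: writing each position uniquely as $n = bm + (i-1)$ with $1 \le i \le b$ and $m \ge 0$, the construction gives $a_{bm+(i-1)} = a_{i,\,m+1}$.

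The key observation is that the shift $\sigma^b$ on $\{a_n\}$ acts as the shift $\sigma$ on each component sequence simultaneously. Concretely, for each residue class $i$ we have $(\sigma^b a)_{bm+(i-1)} = a_{bm + (i-1) + b} = a_{i,\, m+2} = (\sigma a_{i,\cdot})_{m+1}$, and more generally $(L(\sigma^b) a)_{bm+(i-1)} = (L(\sigma) a_{i,\cdot})_{m+1}$ for any polynomial $L$. Next, by hypothesis each $a_{i,\cdot}$ is annihilated by some nonzero polynomial $p_i$, so that $p_i(\sigma) a_{i,\cdot} = 0$; I take $L(x) = \prod_{i=1}^{b} p_i(x)$ (the least common multiple would serve equally well). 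Since $p_i \mid L$, we get $L(\sigma) a_{i,\cdot} = 0$ for every $i$.

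Combining these two facts, $(L(\sigma^b) a)_{bm+(i-1)} = (L(\sigma)a_{i,\cdot})_{m+1} = 0$ for all $i$ and all $m$, so $L(\sigma^b)$ annihilates $\{a_n\}$ on every residue class, hence identically. As $L$ is nonzero, $L(x^b)$ is a nonzero polynomial, and its vanishing on $\{a_n\}$ is precisely a linear recurrence relation for the interlaced sequence, which gives the lemma.

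I expect the only real obstacle to be bookkeeping: pinning down the bijection between positions $n$ of $\{a_n\}$ and pairs $(i,m)$, and verifying cleanly that $\sigma^b$ on the interlaced sequence acts as $\sigma$ on each component. Once that correspondence is nailed down, the algebra is immediate and the single polynomial $L(x^b)$ handles all residue classes at once. As an alternative route, one could argue via generating functions: each component has a rational generating function $R_i$, and the interlaced generating function equals $\sum_{i=1}^b x^{i-1} R_i(x^b)$, a finite sum of rational functions and hence rational, which again characterizes linear recurrence; this trades the operator bookkeeping for careful tracking of initial terms in the partial generating functions.
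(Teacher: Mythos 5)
Your proof is correct and takes essentially the same route as the paper: the paper also passes to the common annihilator $\prod_{i=1}^{b} f_i(x)$ of all the component sequences and then observes that the interlaced sequence satisfies the recurrence with characteristic polynomial $\prod_{i=1}^{b} f_i(x^b)$, which is exactly your $L(x^b)$. Your shift-operator formulation merely makes explicit the bookkeeping (that $\sigma^b$ on the interlaced sequence acts as $\sigma$ on each component) that the paper's one-line sketch leaves implicit.
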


To see why this is true, note that if $f_i(x)$ is the characteristic polynomial of the sequence $\{a_{i,n}\}_{n=0}^\infty$, then each subsequence $\{a_{i,n}\}$ satisfies the recurrence relation whose characteristic polynomial is $\prod_{i=1}^{b} f_i(x)$ and the interlaced recurrence relation satisfies the recurrence relation whose characteristic polynomial is $\prod_{i=1}^{b} f_i(x^b)$.

\begin{proof}[Proof of Theorem \ref{thm:mainfseqrecurrence}]
Let $p$ be the period of $f$ and let $b$ the smallest integer multiple of $p$ such that $b \geq f(n)+1$ for all $n\in\N_0$. When $n \equiv r \bmod b$ for some $r \in \N_0$, we have $f(n) = f(r)$ and we have a linear expression  (i.e., $a_{n+1} - a_{n} - a_{n-f(r)} = 0$) for $a_{n+1}$ in terms of the previous $b$ terms. We can write this as a vector of sufficient dimension ($b^2 + 1$ suffices):
\begin{equation}  \vec v_0 \ = \  [1 ~ -1 ~ \underbrace{0 ~ 0 ~ 0 ~ \dotsb ~ 0}_\text{$f(r) -1$ times} ~ -1 ~ 0 ~ \dotsb ]. \end{equation}

Such a recurrence relation exists for any $a_{n-i}$ where $i \in \N_0$ and $n\equiv r \bmod b$. When $i\leq b^2-b$ (i.e., when there is room in the vector to fit the recurrence relation), we can write the corresponding vector as
\begin{equation}  \vec v_i \ = \  [\underbrace{0 ~ 0 ~ \dotsb ~ 0}_\text{$i$ times} ~ 1 ~ -1 ~ \underbrace{0 ~ 0 ~ 0 ~ 0 ~ \dotsb ~ 0}_\text{$f(n-i) -1$ times} ~ -1 ~ 0 ~ \dotsb ]. \end{equation}

Our goal is to find a recurrence relation that holds for all $n \equiv r \bmod b$ and only uses terms whose indices are from the same residue class modulo $b$. We begin by finding a recurrence relation for each residue class, not necessarily the same recurrence relation. More precisely, when $n\equiv r\bmod b$, we claim that $a_n$ satisfies a recurrence relation of the form
\begin{align}
a_n \ = \  \sum_{i=1}^{b+1} c_i a_{n-bi}.\label{eqn:recrelsinslots}
\end{align}

Our proof of the above claim is algorithmic. In the proof, we index the coordinates of vectors starting at 0. We define \emph{bad coordinates} to be non-zero coordinates whose indices are not multiples of $b$. A vector without bad coordinates corresponds to a recurrence relation of the form \eqref{eqn:recrelsinslots}.

Let $\vec w_0 = \vec v_0$. Let $\vec u_0$ be a truncated copy of $\vec w_0$ containing only the coordinates between coordinate 0 and coordinate $b$, both exclusive. Hence $\vec u_0$ is of dimension $b-1$. We iteratively find $\vec w_1, \vec w_2, \dots, \vec w_{b-1}$ and $\vec u_1, \vec u_2, \dots, \vec u_{b-1}$ through the following algorithm, looping from $i=1$ to $i=b-1$.

\begin{itemize}
\item All bad coordinates have index between $(i-1)b$ and $ib$. Use $\vec v_{(i-1)b+1}$, $\vec v_{(i-1)b+2}$, \dots, $\vec v_{ib-1}$ to cancel all coordinates with index from $(i-1)b +1$ to $ib -1$ so that they are all zero. This yields the vector $\vec w_i$. All bad coordinates of $\vec w_i$ have index between $ib$ and $(i+1)b$ because the degree of the recurrence relation corresponding to vectors $\vec v_0$, $\vec v_1$, \ldots, $\vec v_{b^2-b}$ are all at most $b$.
\item If $\vec w_i$ has zeros in all coordinates with index from $ib+1$ and $(i+1)b-1$, we have a recurrence relation of the form \eqref{eqn:recrelsinslots}, where the only terms with non-zero coefficients are of the form $a_{n-ib}$ for $i\in\N_0$. We are done. Otherwise, we continue.
\item Let $u_i$ be a truncation of $\vec w_i$, containing only coordinates with indices from $ib+1$ to $(i+1)b-1$ (i.e., the bad coordinates), so $\vec w_i$ is a vector of dimension $(b-1)$.
\end{itemize}

Let $U = \{\vec u_0, \vec u_1, \dots, \vec u_{b-1} \}$. Notice $U$ has $b$ vectors, each of dimension $b-1$. Since $U$ cannot be a linearly independent set, we have a non-trivial solution to $\lambda_0 \vec u_0 +\lambda_1 \vec u_1 + \dots + \lambda_{b-1} \vec u_{b-1} = \vec 0$. We want to find a non-trivial linear combination of shifted versions of the vectors $\vec w_0, \vec w_1, \dots, \vec w_{b-1}$ so that it contains no bad coordinates. Let $T_b:\R^{b^2+1}\to\R^{b^2+1}$ shift the coordinates of a vector to the right by $b$ coordinates. Since each coordinate remains in the same residue class modulo $b$, the relation corresponding to a vector with shifted coordinates still holds. All the bad coordinates in $T_b^{b-i-1}(\vec w_i)$ have index between $b^2 -b +1$ and $b^2 -1$. Thus $\sum_{i=0}^b \lambda_i T_b^{b-1-i}(\vec w_i)$ is of the form \eqref{eqn:recrelsinslots} because the bad coordinates cancel to give 0.

Notice that this sum is not the zero vector because the first coordinate of $\vec w_i$ is always 1 and for the largest $i$ such that $\lambda_i \neq 0$, the first non-zero coordinate is further left than the first non-zero coordinate of any other vector in the sum (because the other vectors are shifted even further to the right).

For each residue class modulo $b$, we have a recurrence relation that describes the subsequence of $\{a_n\}$ with indices from that residue class. Thus we can write $\{a_n\}$ as $b$ linearly recurrent sequences interlaced together. By Lemma \ref{lem:interlacedsequences}, $\{a_n\}$ is also a linearly recurrent sequence. \end{proof}

As the above proof is algorithmic, a detailed step-by-step example is provided in Appendix \ref{appendix:exampleoflinearrec} for illustrative purposes.

%%%%%%%%%%%%%%%%%%%%%%%%%%%%%%%%%%%%%%%%%%%%%%%%%%%%%%%%%%%%%%%%%%%%%%%%%%%%%%%%%%%%%%%%%%%%%%%%%%%%%%%%%%%%%%%%%%%%%%%%%%%%%%%%%%%%%%%%%%%%%%%%%%%%
%%%%%%%%%%%%%%%%%%%%%%%%%%%%%%%%%%%%%%%%%%%%%%%%%%%%%%%%%%%%%%%%%%%%%%%%%%%%%%%%%%%%%%%%%%%%%%%%%%%%%%%%%%%%%%%%%%%%%%%%%%%%%%%%%%%%%%%%%%%%%%%%%%%%
%%%%%%%%%%%%%%%%%%%%%%%%%%%%%%%%%%%%%%%%%%%%%%%%%%%%%%%%%%%%%%%%%%%%%%%%%%%%%%%%%%%%%%%%%%%%%%%%%%%%%%%%%%%%%%%%%%%%%%%%%%%%%%%%%%%%%%%%%%%%%%%%%%%%
%%%%%%%%%%%%%%%%%%%%%%%%%%%%%%%%%%%%%%%%%%%%%%%%%%%%%%%%%%%%%%%%%%%%%%%%%%%%%%%%%%%%%%%%%%%%%%%%%%%%%%%%%%%%%%%%%%%%%%%%%%%%%%%%%%%%%%%%%%%%%%%%%%%%
\section{Radix Representation and the Factorial Number System}
\label{section:radixandfactorial}

Numerous radix representations can be interpreted as $f$-decompositions. The most basic are base $b$ representations, which can be interpreted as $f$-decompositions where $f(n) = n \bmod (b-1)$.

\begin{exa} Consider the following function $f$:
\begin{equation}  \{ f(n) \} \ = \  \{ \underbracket{0,1,2,3,}\underbracket{0,1,2,3,}\underbracket{0,1,2,3,}\dots \}.\end{equation}
The associated $f$-sequence is
\begin{equation}  \{a_n\} \ = \  \{ \underbracket{1,2,3,4,}\underbracket{5,10,15,20,}\underbracket{25, 50, 75, 100,}\dots \}. \end{equation}
An $f$-decomposition permits at most one summand from each ``bin''. In the base 5 representation of any natural number, the $i^\text{th}$ digit denotes one of $1\cdot 5^{i-1}$, $2\cdot 5^{i-1}$, $3\cdot 5^{i-1}$, $4\cdot 5^{i-1}$, or 0. This highlights the relationship between these $f$-decompositions and base 5 representations.
\end{exa}

The Factorial Number System is a mixed radix numeral system, where the $i^\text{th}$ radix corresponds to a place value of $(i-1)!$ and the digits go from 0 to $i-1$. Like base $b$ decompositions, this can be interpreted as $f$-decompositions, where
\begin{equation}  \{a_n\} \ = \  \underbracket{1,}\underbracket{2,4,}\underbracket{6,12,18,}\underbracket{24,48,72,96,}\underbracket{120,240,360,480,600} \end{equation}
and
\begin{equation}  \{f(n)\} \ = \  \{\underbracket{0,}\underbracket{0,1,}\underbracket{0,1,2,}\underbracket{0,1,2,3,}\underbracket{0,1,2,3,4,}\dots \}.\end{equation}
Here, too, at most one term from each bin may be chosen in an $f$-decomposition.

Let the random variable $X_n$ denote the number of summands in the Factorial Number System Representation of an integer picked randomly from $\left[0, (n+1)!\right)$ with uniform probability. We now prove the distribution of $X_n$ converges in distribution to a normal distribution.

\begin{proof}[Proof of Theorem \ref{thm:factorialgaussian}]
We can write $X_n$ as a sum of random variables $Y_1, Y_2, \dots, Y_n$, where $Y_i$ represents the number of summands from the $i^\text{th}$ bin. For any integer $i \in [1, n]$, there are an equal number of integers in the interval $\left[n!, (n+1)!\right)$ with a given term from the $i^\text{th}$ bin in the integer's decomposition as there are integers with no terms from the $i^\text{th}$ bin in its decomposition. Therefore each digit from 0 to $i$ has equal probability in the $i^\text{th}$ bin, independent of the terms in other bins.

This proves that each $Y_i$ is independent from the other $Y_j$ random variables, and $P(Y_i = 1)$ $=$ $\frac{i-1}{i}$, and $P(Y_i = 0)$ $=$ $\frac{1}{i}$. We apply the Lyapunov Central Limit Theorem (see for example \cite[pp.\ 371]{Billingsley}) to show that $X_n$ converges in distribution to a Gaussian distribution as $n\to\infty$; note we cannot apply the standard version as our random variables are not identically distributed.

Let $s_n^2 = \sum_{i=1}^n \sigma_i^2$. The Lyapunov Central Limit Theorem states that if there exists some $\delta>0$ such that
\begin{align}
\lim_{n\to\infty}\frac{1}{s_n^{2+\delta}}\sum_{i=1}^n \E \left ( | Y_i - \mu_i|^{2+\delta} \right ) \ = \  0,
\end{align}
then the Central Limit Theorem for $Y_i$ holds. That is, $\frac 1N \sum_{i=1}^N Y_i$ converges in distribution to a normal distribution.

We let $\mu_i = \E (Y_i)$ and $\sigma_i^2 = \E(Y_i^2-\mu_i^2)$. From our definition of $Y_i$ above, we can see
\begin{align}
\mu_i \ = \  & \E(Y_i) \ = \  \frac{i-1}i\nonumber\\
\sigma_i^2 \ = \  & \E(Y_i^2-\mu_i^2) \ = \  \frac{i-1}i - \frac{(i-1)^2}{i^2} \ = \  \frac{i-1}{i^2}.
\end{align}
We show we may take $\delta=2$. %$s_n^4 = \left( \sum_{i=0}^n \frac{i-1}{i^2} \right)^2$. Now,
\begin{align}
    \lim_{n\to\infty} \frac{1}{s_n^{2+\delta}}\sum_{i=1}^n \E \left ( | Y_i - \mu_i|^{2+\delta} \right ) \ = \  & \lim_{n\to\infty} \frac{1}{\left( \sum_{i=1}^n \frac{i-1}{i^2} \right)^2}\sum_{i=1}^n \left ( \frac{i-1}{i} \left( \frac1i \right)^4 + \frac1i \left(\frac{i-1}{i}\right)^4 \right )\nonumber\\
\ = \  & \lim_{n\to\infty} \frac{1}{\left( \sum_{i=1}^n \frac{i-1}{i^2} \right)^2}\sum_{i=1}^n \frac{i^4 - 4i^3+ 6i^2 - 3i}{i^5}\nonumber\\
\ = \  & \lim_{n\to\infty} \frac{O(\log n)}{\log^2(n) + o(\log^2 n)} \ = \  0.
\end{align}
Thus the Lyapunov Central Limit Theorem conditions are met, and $X_n'$, the normalization of $X_n$, converges in distribution to the standard normal distribution as $n\to\infty$. \end{proof}

Similar results for base $b$ representations follow trivially from the classical Central Limit Theorem. As a corollary to Theorem \ref{thm:factorialgaussian}, we can show the following previously known result (see \cite{FS}).

\begin{cor}\label{cor:unsignedstirlinggauss} The unsigned Stirling numbers of the first kind $\stir{n}{k}$ are asymptotically normally distributed.\end{cor}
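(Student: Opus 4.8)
The plan is to recognize the summand-count $X_n$ from Theorem \ref{thm:factorialgaussian} as nothing more than a relabeling of the (row-normalized) unsigned Stirling numbers of the first kind, so that the Gaussian limit just established transfers with no new analytic work. Recall from the proof of that theorem that $X_n = \sum_{i=1}^n Y_i$, where the $Y_i$ are independent with $P(Y_i=1) = (i-1)/i$ and $P(Y_i=0) = 1/i$. First I would exploit independence to compute the probability generating function
\[
\E\!\left(t^{X_n}\right) \ = \ \prod_{i=1}^n\left(\frac1i + \frac{i-1}{i}\,t\right) \ = \ \frac{1}{n!}\prod_{j=0}^{n-1}\bigl(1+jt\bigr).
\]

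Next I would invoke the defining rising-factorial identity for the unsigned Stirling numbers, $\prod_{j=0}^{n-1}(x+j)=\sum_{k=0}^n \stir{n}{k}x^k$. Factoring $1+jt = t\,(1/t + j)$ and substituting $x = 1/t$ turns the product above into $\prod_{j=0}^{n-1}(1+jt)=\sum_{k=0}^n \stir{n}{k}\,t^{\,n-k}$, whence
\[
\E\!\left(t^{X_n}\right) \ = \ \frac{1}{n!}\sum_{k=0}^n \stir{n}{k}\,t^{\,n-k}.
\]
Reading off the coefficient of $t^{\,n-k}$ identifies the law of $X_n$ exactly: $P(X_n = n-k) = \stir{n}{k}/n!$. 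Equivalently, $n-X_n$ has probability mass function $k\mapsto \stir{n}{k}/n!$, which (since $\sum_k \stir{n}{k}=n!$ these coefficients genuinely sum to one) is precisely the normalized Stirling-number distribution whose asymptotic normality we wish to prove; it is also the familiar law of the number of cycles of a uniformly random permutation of $n$ letters.

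Finally I would transfer the limit. Since $n-X_n$ is an affine function of $X_n$, and affine maps merely shift and scale the centering and normalizing constants without disturbing a Gaussian limit, Theorem \ref{thm:factorialgaussian} immediately yields that the standardization of $n-X_n$ converges in distribution to the standard normal. Because $P(n-X_n=k) = \stir{n}{k}/n!$, this is exactly the assertion that the $\stir{n}{k}$ are asymptotically normally distributed.

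I do not anticipate a genuine obstacle: the entire content is the identification $\E(t^{X_n}) = \tfrac{1}{n!}\sum_k \stir{n}{k}\,t^{\,n-k}$, after which the corollary is a one-line consequence of Theorem \ref{thm:factorialgaussian}. The only point demanding a little care is the bookkeeping of the index reversal $k\leftrightarrow n-k$ and a brief remark that the affine relabeling preserves asymptotic normality; both are routine.
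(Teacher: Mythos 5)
Your proposal is correct, and it reaches the corollary by a genuinely different route from the paper. The paper argues combinatorially: it counts $p_{n,k}$, the number of integers in $[0,(n+1)!)$ with exactly $k$ summands, as the elementary symmetric polynomial $\sum_{|S|=k}\prod_{s\in S}s$ over $\{1,\dots,n\}$, recognizes this as the coefficient of $x^{n-k+1}$ in $x(x+1)\cdots(x+n)$, and concludes $p_{n,k}=\stir{n+1}{n-k+1}$, after which Theorem \ref{thm:factorialgaussian} finishes the proof. You instead derive the exact law of $X_n$ analytically, using independence of the bin variables $Y_i$ to get $\E(t^{X_n})=\frac{1}{n!}\prod_{j=0}^{n-1}(1+jt)$ and then the rising-factorial identity $\prod_{j=0}^{n-1}(x+j)=\sum_k\stir{n}{k}x^k$ to read off $P(X_n=n-k)=\stir{n}{k}/n!$; the two arguments encode the same structural fact (Stirling numbers as elementary symmetric functions of $1,\dots,n$), but yours buys an exact probabilistic identification --- $n-X_n$ is distributed as the number of cycles of a uniform random permutation --- which the paper's counting leaves implicit. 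Two small bookkeeping remarks. First, your affine transfer has slope $-1$, so you are implicitly using the symmetry of the standard normal under negation ($-X_n'\Rightarrow N(0,1)$); this is routine but worth a word. Second, you inherited the digit probabilities $P(Y_i=0)=1/i$ exactly as stated in the paper's proof of Theorem \ref{thm:factorialgaussian}, which carry a harmless off-by-one relative to the interval $[0,(n+1)!)$: since the $i$th bin offers $i$ nonzero choices plus zero, the correct values are $P(Y_i=0)=1/(i+1)$, and redoing your computation with these gives $\E(t^{X_n})=\frac{1}{(n+1)!}\prod_{j=1}^{n}(1+jt)$, hence $P(X_n=m)=\stir{n+1}{n+1-m}/(n+1)!$, matching the paper's $p_{n,k}=\stir{n+1}{n-k+1}$ rather than your row-$n$ version. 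Whether one lands on row $n$ or row $n+1$ is immaterial to the asymptotic normality claim, so your conclusion stands either way.
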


\begin{proof}
Let $p_{n,k}$ be the number of integers in $[0, (n+1)!)$ whose $f$-decomposition contains exactly $k$ summands. It is clear that these summands all come from the first $n$ bins. To count the number of ways to choose $k$ summands from $n$ bins, we select $k$ of the $n$ bins. For each bin chosen (say we choose the $i^\text{th}$ bin), we can choose any of the $i$ elements in the bin.

Let $\mathcal{I} = \{1,2,\dots,n\}$. We can write $p_{n,k}$ as
\begin{align}
p_{n,k} \ = \  \sum_{S \subset \mathcal{I}, \  |S|= k} \ \  \prod_{s\in S} s.
\end{align}
Notice that defined in this way, $p_{n,k}$ is the coefficient of $x^{n-k+1}$ in the expansion of $x(x+1)$ $(x+2)$ $\cdots$ $(x+n)$. This allows us to write $p_{n,k}$ as the Stirling number $\stir{n+1}{n-k+1}$. Therefore, by Theorem \ref{thm:factorialgaussian} the unsigned Stirling numbers of the first kind are asymptotically normally distributed.
\end{proof}

%%%%%%%%%%%%%%%%%%%%%%%%%%%%%%%%%%%%%%%%%%%%%%%%%%%%%%%%%%%%%%%%%%%%%%%%%%%%%%%%%%%%%%%%%%%%%%%%%%%%%%%%%%%%%%%%%%%%%%%%%%%%%%%%%%%%%%%%%%%%%%%%%%%%
%%%%%%%%%%%%%%%%%%%%%%%%%%%%%%%%%%%%%%%%%%%%%%%%%%%%%%%%%%%%%%%%%%%%%%%%%%%%%%%%%%%%%%%%%%%%%%%%%%%%%%%%%%%%%%%%%%%%%%%%%%%%%%%%%%%%%%%%%%%%%%%%%%%%
%%%%%%%%%%%%%%%%%%%%%%%%%%%%%%%%%%%%%%%%%%%%%%%%%%%%%%%%%%%%%%%%%%%%%%%%%%%%%%%%%%%%%%%%%%%%%%%%%%%%%%%%%%%%%%%%%%%%%%%%%%%%%%%%%%%%%%%%%%%%%%%%%%%%
%%%%%%%%%%%%%%%%%%%%%%%%%%%%%%%%%%%%%%%%%%%%%%%%%%%%%%%%%%%%%%%%%%%%%%%%%%%%%%%%%%%%%%%%%%%%%%%%%%%%%%%%%%%%%%%%%%%%%%%%%%%%%%%%%%%%%%%%%%%%%%%%%%%%
\section{$b$-Bin Decompositions}\label{section:3bingaussian}

\subsection{Zeckendorf's Theorem for $b$-Bin Decompositions}\label{sec:zeckbbindecomp}

Previous work on generalizing Zeckendorf's Theorem (see for example \cite{MW1,MW2,St}) only handled linear recurrence relations with non-negative coefficients. Not only do some types of $f$-decompositions have notions of legal decomposition that do not result from any Positive Linear Recurrence Relations \cite{MW1,MW2} (called $G$-ary digital expansions in \cite{St}), but we can also find functions $f$ that correspond to recurrence relations with some negative coefficients, which is beyond the scope of previous work. In this section we explore a class of $f$ which include some of these new cases.

Let $b\geq3$ be an integer. We partition non-negative integers as ``bins'' of $b$ consecutive integers (i.e., $\{0,1,2,\dots,b-1\}$, $\{b, b+1, \dots, 2b-1\}, \dots$).

\begin{defi}[$b$-bin Decomposition] Let $b \ge 3$ be an integer. A \emph{$b$-bin decomposition} of a positive integer is legal if the following two conditions hold.
\begin{enumerate}
\item No two distinct terms $a_i, a_j$ in a decomposition can have indices $i, j$ from the same bin.
\item No consecutive terms from the sequence may be in the decomposition.
\end{enumerate}
\end{defi}

We can interpret this as an $f$-decomposition, where
\begin{align}
f(n) \ = \  \max \{ 1, n \bmod b \}.
\end{align}

\begin{exa}
When $b=3$, the resulting sequence is
\begin{equation}  \{a_n \} \ = \   \underbracket{1,2,3}, \underbracket{4, 7, 11}, \underbracket{15, 26, 41}, \underbracket{56, 97, 153}, \underbracket{209, 362, 571}, \dots. \end{equation}
Notice the similarity to base-$b$ representation discussed in Section \ref{section:radixandfactorial}.
\end{exa}

We begin by finding a recurrence relation for the sequence $\{a_n\}_{n=0}^\infty$ resulting from constant-width bins.

\begin{prop}
If $f(n) = \max \{ 1, n \bmod b \}$ (and thus we have a $b$-bin decomposition), then the associated $f$-sequence $\{a_n\}_{n=0}^\infty$ satisfies $a_n = (b+1)a_{n-b} - a_{n-2b}$.
\end{prop}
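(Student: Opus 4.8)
The plan is to derive the claimed second-order recurrence $a_n = (b+1)a_{n-b} - a_{n-2b}$ directly from the defining first-order recurrence of the $f$-sequence, namely $a_{n+1} = a_n + a_{n-f(n)}$, by working within a single residue class modulo $b$. Since $f(n) = \max\{1, n \bmod b\}$ is periodic with period $b$, I expect the cleanest argument to fix a residue $r \bmod b$ and track how $a_n$ relates to $a_{n-b}$ and $a_{n-2b}$ entirely through telescoping the one-step recurrence across one full period.

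First I would set up notation for one bin. For an index $n$ with $n \equiv r \pmod b$, I would write out the $b$ consecutive one-step relations connecting $a_{n-b}$ up to $a_n$. Because $f(m) = \max\{1, m \bmod b\}$ equals $m \bmod b$ except when $m \equiv 0 \pmod b$ (where it jumps to $1$), each step $a_{m+1} = a_m + a_{m - f(m)}$ references a term that is either the immediately preceding term or a term earlier in the \emph{same} bin. The key computation is to sum these $b$ relations over one period and simplify: the telescoping of the $a_m$ differences should collapse, and the remaining terms $a_{m - f(m)}$ should reorganize into a clean multiple of $a_{n-b}$ plus a correction that itself telescopes at the next level down to produce the $-a_{n-2b}$ term. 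Concretely, I anticipate showing $a_n - a_{n-b} = \sum (\text{within-bin back-references})$, and then relating that sum to $a_{n-b} - a_{n-2b}$ to extract the coefficient $b+1$.

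An alternative and possibly more transparent route, which I would use as a cross-check, is to appeal to the structure already established in Theorem~\ref{thm:mainfseqrecurrence}: since $f$ is periodic with period $b$, the sequence splits into $b$ interlaced subsequences, one per residue class, each satisfying its own recurrence of the form \eqref{eqn:recrelsinslots}. Here I would argue that all $b$ residue classes satisfy the \emph{same} two-term recurrence in steps of $b$, namely $x_k = (b+1)x_{k-1} - x_{k-2}$ where $x_k = a_{r + kb}$, and then it suffices to verify this single recurrence for each class. Verifying it reduces to the telescoping identity above, so the two approaches meet; the characteristic polynomial $t^2 - (b+1)t + 1$ would serve as a sanity check, since its roots multiply to $1$ and sum to $b+1$, consistent with the $-a_{n-2b}$ coefficient.

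The main obstacle is handling the irregular step at indices $m \equiv 0 \pmod b$, where $f$ does not simply equal $m \bmod b$ but is forced up to $1$ by the $\max$. This single exceptional step breaks the otherwise uniform pattern of within-bin back-references and is exactly what must be tracked carefully to get the coefficient $b+1$ rather than something like $b$. I expect the bookkeeping here — correctly indexing which earlier term each step points to, and confirming that the exceptional step contributes the extra $+a_{n-b}$ that upgrades the coefficient to $b+1$ while the accumulated lower-order terms assemble into precisely $-a_{n-2b}$ — to be the delicate part; everything else is routine algebraic telescoping. A convenient safeguard is to verify the result against the explicit $b=3$ sequence given in the preceding example, where $a_n = 4a_{n-3} - a_{n-6}$ can be checked on the listed values.
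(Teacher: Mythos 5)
Your plan is sound and completable, but it takes a genuinely different route from the paper. The paper's proof is a strong induction on $n$ that never tracks which term each step back-references: it computes the first $2b+1$ terms explicitly, verifies the base cases $n=2b$ and $n=2b+1$, and then for $n'>2b+1$ expands \emph{both} terms of the defining relation $a_{n'}=a_{n'-1}+a_{n'-1-f(n'-1)}$ by the inductive hypothesis, regroups, and uses only the periodicity $f(n'-1)=f(n'-b-1)=f(n'-2b-1)$ to recognize the two regrouped sums as the one-step recurrences for $a_{n'-b}$ and $a_{n'-2b}$. Thus the $\max\{1,\cdot\}$ irregularity at $m\equiv 0 \bmod b$, which you rightly identify as the delicate point of your telescoping, simply never arises in the paper's argument; the only facts used are periodicity and $f(m)\le b-1$ (to ensure $n'-1-f(n'-1)\ge 2b$ so the hypothesis applies). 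Your route does work: summing the one-step relations over the bin-aligned window $m\in[kb,(k+1)b)$ gives $a_{(k+1)b}=b\,a_{kb}+a_{kb-1}$, and the single relation $a_{kb}=a_{kb-1}+a_{(k-1)b}$ (from $f(kb-1)=b-1$) upgrades this to $a_{(k+1)b}=(b+1)a_{kb}-a_{(k-1)b}$; within a bin one similarly gets $a_{kb+r}=(r+1)a_{kb}-a_{(k-1)b}$ for $1\le r\le b-1$, so each residue class is a fixed linear combination of the residue-zero subsequence and inherits its two-term recurrence. Two cautions, though. First, for $r\ne 0$ the period sum is not literally ``a clean multiple of $a_{n-b}$'': it equals $(b-r)a_{(k-1)b}+a_{kb-1}+(r-1)a_{kb}$, a mix of boundary terms from the residue-zero class, so the reduction through $a_{kb}$ is an essential extra step rather than mere reorganization. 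Second, the telescoping identities require genuine indices and fail near the start of the sequence (where the convention $a_m=1$ for $m<0$ is in force), so you still owe an explicit base-case verification for the smallest $n$ where the recurrence is asserted (the paper checks $n=2b,2b+1$; in your setup the range $2b\le n<3b$ needs a direct check), and the numerical $b=3$ spot-check is a sanity test, not a substitute. Also note that your appeal to Theorem \ref{thm:mainfseqrecurrence} only guarantees \emph{some} recurrence of the form \eqref{eqn:recrelsinslots} for each residue class, not this specific two-term one, so it is a cross-check rather than an independent proof. In exchange for the heavier bookkeeping, your approach yields the explicit intermediate formulas above, which explain structurally why all $b$ residue classes share a single recurrence; the paper's regrouping is shorter and exploits periodicity alone.
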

\begin{proof}
The proof follows by induction. We prove two base cases and show that the recurrence relation holds for the remaining terms by applying the relation \eqref{eqn:generatesequencefromf} that was used to generate the sequence, regrouping terms, and finding the desired recurrence.

We begin by finding the initial $2b+1$ terms of the sequence.
\begin{itemize}
\item For $n\leq b$, we have $a_n = n + 1$.
\item For $n = b+1$, we have $b-f(b) = b - 1$. Thus $a_{n} = a_{b} + a_{b-1} = 2b+1$.
\item For $b+2 \leq n \leq 2b$, we have $n-1-f(n-1) = b$. Therefore $a_n = a_{n-1} + a_b$, which gives us $a_n = 2b+1+ (n-b-1)(b+1)$.
\item For $n=2b+1$, we have $n-1-f(n-1) = 2b - 1$. Therefore, $a_n = a_{2b} + a_{2b-1} = 2b+1 + (b-1)(b+1) + 2b+1 + (b-2)(b+1) = 2b^2 + 3b - 1$.
\end{itemize}

First, we prove the base cases and show that the recurrence relation holds for $a_{2b}$ and $a_{2b+1}$. We have
\begin{align} a_{2b} \  = \  b^2+2b\ =\ (b+1)(b+1) - 1 \ = \  (b+1)a_{2b-b} - a_{2b-2b}\end{align} and \begin{align} a_{2b+1} \  = \  2b^2 + 3b - 1 \ = \  (b+1)(2b+1) - 2 \ = \  (b+1)a_{(2b+1)-b} - a_{(2b+1)-2b}.\end{align}

For any $n'>2b+1$, our induction hypothesis states that $a_n = (b+1)a_{n-b} - a_{n-2b}$ for all $n \in [2b, n')$. We use $n'-1-f(n'-1) \geq 2b$ for $n'>2b+1$.

We constructed the sequence as $a_{n'} = a_{n'-1} + a_{n'-1-f(n'-1)}$. Therefore
\begin{align}
a_{n'} \  = \  & ( (b+1) a_{n'-1-b}-a_{n'-1-2b}) + ( (b+1) a_{n'-1-f(n'-1)-b} - a_{n'-1-f(n'-1)-2b})\nonumber\\
   = \  & (b+1)(a_{n'-1-b} + a_{n'-1-f(n'-1)-b}) - (a_{n'-1-2b} + a_{n'-1-f(n'-1)-2b}).
\end{align}

As $f$ is periodic, $f(n'-1) = f(n'-b-1)$. Thus
\begin{align}
a_{n'} \  = \  & (b+1)(a_{n'-b-1} + a_{n'-b-1-f(n'-b-1)}) - (a_{n'-2b-1} + a_{n'-2b-1-f(n'-2b-1)})\nonumber\\
= \  & (b+1)a_{n'-b} - a_{n'-2b}.
\end{align}

\end{proof}

When $b=3$, the corresponding recurrence relation is $a_n = 4a_{n-3} - a_{n-6}$. In Appendix \ref{appendix:3binnegativecoeffients} we prove that there is no linear recurrence relation with non-negative coefficients that this sequence satisfies. Thus this $f$-sequence is a new sequence whose behavior cannot be analyzed by previous methods.

While previous techniques cannot handle this sequence, it is still natural to ask if we obtain Gaussian behavior, as this sequence has similar properties to previously studied sequences in terms of uniqueness of decomposition. The answer is yes, and we prove below that the number of summands for integers chosen from $[0, a_{bn})$ converges in distribution to being normally distributed as $n\to\infty$. In \S\ref{section:genfunc} we calculate the generating function for the number of summands, then compute the mean and variance in \S\ref{sec:computingmeanvar}, and finally prove Gaussianity in \S\ref{sec:gaussianbehaviorbbindecomp}.

%%%%%%%%%%%%%%%%%%%%%%%%%%%%%%%%%%%%%%%%%%%%%%%%%%%%%%%%%%%%%%%%%%%%%%%
\subsection{Generating Function}\label{section:genfunc}

We take a combinatorial approach to finding the distribution of the number of summands for integers in $[0, a_{bn})$. We begin by finding a two-dimensional sequence for the number of integers that can be written as the sum of $k$ summands chosen from the first $n$ bins.

\begin{prop}Let $p_{n,k}$ be the number of integers that are the sum of $k$ summands from $n$ consecutive bins. We have
\begin{align}
p_{n,k} &\ = \  p_{n-1, k} + bp_{n-1, k-1} - p_{n-2, k-2}.\label{eqn:RecRelP}
\end{align}
\end{prop}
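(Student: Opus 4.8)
The plan is to establish the recurrence \eqref{eqn:RecRelP} by a careful combinatorial decomposition based on the behavior of summands in the final (that is, the $n$th) bin. Recall that the legal $b$-bin decompositions are governed by two rules: at most one summand may come from each bin, and no two consecutive terms of the sequence may appear. I would count $p_{n,k}$, the number of legally decomposable integers using exactly $k$ summands drawn from the first $n$ bins, by splitting into cases according to whether the $n$th bin contributes a summand, and if so, which position within that bin it occupies relative to the previous bin.

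First I would handle the case in which the $n$th bin contributes \emph{no} summand. Then all $k$ summands lie in the first $n-1$ bins, and the consecutivity constraint is automatically unaffected by the empty $n$th bin; this contributes exactly $p_{n-1,k}$. Next I would consider the case in which the $n$th bin contributes \emph{one} summand, so the remaining $k-1$ summands come from the first $n-1$ bins. Naively each of the $b$ positions $a_{(n-1)b}, a_{(n-1)b+1}, \dots, a_{nb-1}$ in the $n$th bin could host this summand, over any legal choice of $k-1$ summands from the first $n-1$ bins, which suggests a main term of $b\,p_{n-1,k-1}$. The subtlety, and the source of the correction term, is the second legality rule: if the chosen summand in the $n$th bin is the very first element $a_{(n-1)b}$ of that bin, then it is consecutive with the last element $a_{(n-1)b-1}$ of the $(n-1)$st bin, so those configurations in which the $(k-1)$-summand decomposition from the first $n-1$ bins uses that last element must be excluded.

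The heart of the argument is therefore to identify the overcounted configurations and show they number exactly $p_{n-2,k-2}$. I would argue that the forbidden configurations are precisely those that place a summand at $a_{(n-1)b}$ (the first slot of bin $n$) \emph{and} a summand at $a_{(n-1)b-1}$ (the last slot of bin $n-1$). Once both of these two specific summands are forced, the remaining $k-2$ summands must come from the first $n-2$ bins, and moreover the last slot of bin $n-1$ being occupied already respects consecutivity with bin $n-2$ in the same way a fresh count over $n-2$ bins would: no summand from bin $n-2$ can be consecutive with $a_{(n-1)b-1}$, which is exactly the constraint built into any legal decomposition ending at or before bin $n-2$. Hence these excluded configurations are in bijection with legal $(k-2)$-summand decompositions from the first $n-2$ bins, giving the count $p_{n-2,k-2}$. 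Combining the three contributions yields $p_{n,k} = p_{n-1,k} + b\,p_{n-1,k-1} - p_{n-2,k-2}$.

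The main obstacle I anticipate is making the correction term rigorous: I must verify that the overcounted set is cleanly characterized by forcing summands in the two boundary slots $a_{(n-1)b-1}$ and $a_{(n-1)b}$, and that after fixing them the residual count really is $p_{n-2,k-2}$ rather than some perturbed quantity. In particular I would need to check that the consecutivity constraint linking bin $n-2$ to the occupied slot $a_{(n-1)b-1}$ matches exactly the intrinsic constraint already accounted for in $p_{n-2,k-2}$, so that no further boundary correction is needed. A cleaner and safer route, which I would pursue in parallel as a check, is to pass to the bivariate generating function $P(x,y) = \sum_{n,k} p_{n,k} x^n y^k$: each bin contributes a transfer factor, and the relation $a_n = (b+1)a_{n-b} - a_{n-2b}$ proved in the previous proposition strongly suggests that the generating function satisfies a denominator of the form $1 - (1 + by)x + y^2 x^2$, whose reciprocal expansion reproduces \eqref{eqn:RecRelP} directly; matching these two derivations would confirm the combinatorial bookkeeping above.
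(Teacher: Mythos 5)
Your proposal is correct and takes essentially the same route as the paper: condition on whether the topmost bin contributes a summand, obtain the main terms $p_{n-1,k} + b\,p_{n-1,k-1}$, and subtract exactly $p_{n-2,k-2}$ for the overcounted configurations pairing the first slot of bin $n$ with the last slot of bin $n-1$, observing (as the paper does) that this fixed pair imposes no further constraint on the remaining $n-2$ bins because the only cross-bin adjacency is at bin boundaries. Your parallel generating-function check is harmless but unnecessary, and note the paper derives $F(x,y)$ \emph{from} this recurrence rather than independently.
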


\begin{proof}
We count all possible ways to legally choose $k$ summands from the first $n$ bins. We call the bin containing $\{a_{bn-(b-1)}, a_{bn-(b-2)}, \dots, a_{bn}\}$ the first bin, the bin containing $\{a_{b(n-1)-(b-1)}, \dots, \allowbreak a_{b(n-1)}\}$ the second bin, and so on. We derive a recurrence relation for the $\{p_{n,k}\}$ by counting how many valid choices there are.

In the arguments below we assume $n \ge 2$ so that there are at least two bins. There are two ways to have a contribution to $p_{n,k}$.

\begin{enumerate}
\item We may choose no summands from the first bin. Therefore we need $k$ summands from $n-1$ bins and there are exactly $p_{n-1,k}$ ways of doing so.

\item We may choose any of the $b$ summands in the first bin, leaving $k-1$ terms to choose from $n-1$ bins. There are $bp_{n-1,k-1}$ ways of doing that. However, this argument counts some illegal decompositions. We are not allowed to choose the last element of the first bin and the first element of the second bin. There are exactly $p_{n-2,k-2}$ such decompositions because after choosing these two terms, $k-2$ terms remain to be chosen from the $n-2$ remaining bins, and there are no restrictions on our choice from the remaining bins.
\end{enumerate}

Hence for $n \ge 2$ we have the following recurrence relation (in two variables):
\begin{align}
p_{n,k} &\ = \  p_{n-1, k} + bp_{n-1, k-1} - p_{n-2, k-2}.
\end{align}
\end{proof}

This recurrence relation allows us to compute a closed form expression for $F(x,y)$, the generating function of the $p_{n,k}$'s.

\begin{prop}Let $F(x,y) = \sum_{n,k\geq0} p_{n,k} x^ny^k$. The closed form expression of $F(x,y)$ is
\begin{align}
F(x,y) & \ = \  \frac{1}{1 - x - bxy + x^2 y^2}\label{eqn:GeneratingFunctionF}
\end{align}
\end{prop}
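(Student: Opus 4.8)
The plan is to derive the closed-form generating function directly from the two-variable recurrence \eqref{eqn:RecRelP}, which holds for all $n \ge 2$. The standard technique is to multiply the recurrence by $x^n y^k$ and sum over all $n, k$ for which it is valid, then recognize the resulting sums as shifted copies of $F(x,y)$ itself. Concretely, I would write $\sum_{n \ge 2, k} p_{n,k} x^n y^k$ on the left, and on the right split into the three terms: $\sum p_{n-1,k} x^n y^k = x F(x,y)$, then $b \sum p_{n-1,k-1} x^n y^k = bxy\, F(x,y)$, and finally $-\sum p_{n-2,k-2} x^n y^k = -x^2 y^2 F(x,y)$, after reindexing each sum so the subscripts match $F$. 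Solving the resulting linear equation for $F(x,y)$ should give the denominator $1 - x - bxy + x^2 y^2$.

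The main obstacle is bookkeeping the boundary terms correctly, since the recurrence is only asserted for $n \ge 2$; the low-order terms $p_{0,k}$ and $p_{1,k}$ must be handled separately and shown to be consistent with the claimed closed form. First I would record the initial data: $p_{0,0} = 1$ and $p_{0,k} = 0$ for $k \ge 1$ (the empty sum is the only decomposition using zero bins, representing the integer $0$), and for a single bin $p_{1,0} = 1$, $p_{1,1} = b$, and $p_{1,k} = 0$ for $k \ge 2$, since one bin contributes either nothing or one of its $b$ terms. When I reindex the shifted sums, the discrepancy between $\sum_{n \ge 2}$ and the full sum $\sum_{n \ge 0}$ produces correction terms involving exactly these boundary values; the key check is that the corrections telescope so that multiplying out $F(x,y) \cdot (1 - x - bxy + x^2 y^2)$ leaves only the constant $1$, matching $p_{0,0} = 1$.

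Rather than tracking every boundary term, a cleaner route I would actually carry out is to verify the identity in the equivalent form
\begin{equation}
(1 - x - bxy + x^2 y^2) F(x,y) \ = \ 1,
\end{equation}
expand the left side as $\sum_{n,k} \bigl(p_{n,k} - p_{n-1,k} - b p_{n-1,k-1} + p_{n-2,k-2}\bigr) x^n y^k$ (with the convention $p_{n,k} = 0$ whenever $n < 0$ or $k < 0$), and observe that for $n \ge 2$ the bracketed coefficient vanishes by \eqref{eqn:RecRelP}. This reduces the whole problem to checking that the finitely many coefficients with $n \in \{0,1\}$ collapse to give precisely the single term $1$. Using the initial data above, the $n=0$ contribution is $p_{0,0} = 1$ together with $p_{0,k} = 0$ for $k \ge 1$, and the $n=1$ contributions $p_{1,k} - p_{0,k} - b p_{0,k-1} + p_{-1,k-2}$ vanish for every $k$ (for instance at $k=1$ one gets $b - 0 - b \cdot 1 + 0 = 0$). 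Hence the product is identically $1$, which proves the claimed closed form. The expected difficulty is purely the careful enumeration of these edge cases; the algebraic core is immediate once the recurrence is recognized as the annihilating relation for the denominator polynomial.
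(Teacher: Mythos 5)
Your proposal is correct and follows essentially the same route as the paper: derive the functional equation $F(x,y) = xF(x,y) + bxyF(x,y) - x^2y^2F(x,y) + 1$ from the recurrence \eqref{eqn:RecRelP} with the conventions $p_{0,0}=1$, $p_{0,k}=0$ for $k>0$, and $p_{n,k}=0$ for $n<0$, then solve for $F$. You are in fact somewhat more careful than the paper's ``after some algebra,'' since you explicitly verify that the boundary coefficients at $n\in\{0,1\}$ (e.g.\ $p_{1,1}-bp_{0,0}=b-b=0$) collapse to leave exactly the constant $1$.
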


\begin{proof}
We have $p_{0,0}=1$, $p_{0,k} = 0$ if $k>0$, and $p_{n,k} = 0$ if $n < 0$. Using the recurrence relation \eqref{eqn:RecRelP}, after some algebra we find
\begin{equation}F(x,y) \ = \  xF(x,y) + bxyF(x,y) - x^2y^2 F(x,y) + 1,\end{equation}
which yields (\ref{eqn:GeneratingFunctionF}).
\end{proof}
We now find the coefficient of $x^n$ in $F(x,y)$, which we denote by $g_n(y)$.
\begin{prop}
Let $g_n(y) =  \sum_{k\geq 0} p_{n,k} y^k$, which is the coefficient of $x^n$ in the generating function of the $p_{n,k}$'s. For $b>2$, the closed form expression of $g_n(y)$ is
\begin{align}
   g_n(y) \ = \  \frac{\left (by+1 + \sqrt{(b^2-4)y^2 + 2by+1}\right ) ^{n+1} - \left (by+1 - \sqrt{(b^2-4)y^2 + 2by+1}\right )^{n+1}}{2^{n+1}\sqrt{(b^2-4)y^2 + 2by+1}}.
\end{align}
\end{prop}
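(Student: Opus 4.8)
The plan is to treat $F(x,y)$ from \eqref{eqn:GeneratingFunctionF} as a power series in $x$ whose coefficients $g_n(y)$ are functions of the parameter $y$, and to extract these coefficients by converting the rational generating function into a linear recurrence in $n$ that can be solved in closed form. The first step is to rewrite the denominator $1 - x - bxy + x^2y^2$ as a quadratic in $x$, namely $1 - (1+by)x + y^2 x^2$. Cross-multiplying the defining identity $\left(1 - (1+by)x + y^2 x^2\right)\sum_{n\ge 0} g_n(y) x^n = 1$ and comparing coefficients of $x^n$ gives the two-term recurrence
$$ g_n(y) \ = \ (1+by)\,g_{n-1}(y) - y^2\,g_{n-2}(y) \qquad (n \ge 2), $$
together with the base cases $g_0(y) = 1$ and $g_1(y) = 1 + by$ read off from the coefficients of $x^0$ and $x^1$ (and consistent with $p_{0,0}=1$, $p_{1,1}=b$).

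The second step is to solve this recurrence by its characteristic equation $t^2 - (1+by)t + y^2 = 0$, whose roots are
$$ t_\pm \ = \ \frac{(by+1) \pm \sqrt{(1+by)^2 - 4y^2}}{2} \ = \ \frac{(by+1) \pm \sqrt{(b^2-4)y^2 + 2by + 1}}{2}, $$
where the discriminant simplifies to exactly the quantity $D := \sqrt{(b^2-4)y^2 + 2by + 1}$ appearing under the root in the statement, so that $t_+ - t_- = D$. Writing the general solution as $g_n(y) = A\,t_+^n + B\,t_-^n$ and imposing $g_0 = 1$ and $g_1 = 1+by = t_+ + t_-$, I expect the solution to be the Binet-type combination $g_n(y) = (t_+^{n+1} - t_-^{n+1})/(t_+ - t_-)$; this is fastest to confirm directly, by checking that the right-hand side equals $1$ at $n=0$ and $t_+ + t_- = 1+by$ at $n=1$ and that it satisfies the recurrence automatically, since $t_\pm$ are roots of the characteristic polynomial. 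Substituting $t_\pm = (by+1 \pm D)/2$ and $t_+ - t_- = D$ then yields
$$ g_n(y) \ = \ \frac{\left(\tfrac{by+1+D}{2}\right)^{n+1} - \left(\tfrac{by+1-D}{2}\right)^{n+1}}{D} \ = \ \frac{(by+1+D)^{n+1} - (by+1-D)^{n+1}}{2^{n+1}D}, $$
which is precisely the claimed formula once $D$ is written out in full.

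I do not expect a genuine obstacle here; the argument is the standard passage from a rational generating function with a quadratic denominator to a Binet-type closed form, and the hypothesis $b>2$ only serves to keep us in the regime where $D$ is a nonconstant radical, matching the $b$-bin setting $b \ge 3$. The one point worth a remark, rather than a difficulty, is that although $D$ is written as a square root, the resulting expression is genuinely a polynomial in $y$ (as it must be, since $g_n(y) = \sum_{k\ge 0} p_{n,k} y^k$ is a finite sum): expanding the two $(n+1)$-st powers by the binomial theorem, all even powers of $D$ cancel in the difference while the surviving terms each carry an odd power of $D$, so that after dividing by $D$ only even powers $D^{2j} = (D^2)^j$ remain, and $D^2 = (b^2-4)y^2 + 2by + 1$ is a polynomial. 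This confirms that the closed form is an honest polynomial identity, valid formally in $y$ rather than merely for special evaluations.
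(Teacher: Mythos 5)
Your proof is correct, and it takes a genuinely different (though closely related) route from the paper's. The paper works directly with $F(x,y)$: it factors the denominator as $y^2(x-x_1(y))(x-x_2(y))$, performs a partial-fraction decomposition in the variable $x$, expands each term as a geometric series, reads off the coefficient of $x^n$ as $\frac{1}{y^2(x_2-x_1)}\cdot\frac{x_2^{n+1}-x_1^{n+1}}{(x_1x_2)^{n+1}}$, and then substitutes the quadratic-formula expressions for $x_{1,2}(y)$, using $x_1x_2 = 1/y^2$ to simplify. You instead convert the rational form of $F$ into the two-term recurrence $g_n = (1+by)g_{n-1} - y^2 g_{n-2}$ with $g_0=1$, $g_1 = 1+by$, and solve it by a Binet-type formula from the characteristic roots $t_\pm = \bigl(by+1 \pm D\bigr)/2$; the two computations are reciprocal images of one another, since $t_+ = 1/x_1(y)$ and $t_- = 1/x_2(y)$, which is why the same radical $D$ appears in both. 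What your version buys is a shorter and more self-contained verification: checking $n=0$, $n=1$, and that $t_\pm$ satisfy the characteristic equation settles everything, with none of the geometric-series bookkeeping (and it sidesteps an intermediate display in the paper where the general-$b$ computation briefly shows the $b=3$ specialization $\sqrt{5y^2+6y+1}$). What the paper's route buys is that the single partial-fraction expansion produces all the $g_n$ simultaneously from $F$ itself. One hypothesis you use implicitly deserves the explicit remark the paper makes: the Binet expression requires $t_+ \neq t_-$, i.e.\ $D \neq 0$, which holds for $b>2$ and $y \ge 0$ since $D^2 = (b^2-4)y^2 + 2by + 1 \ge 1$ there; your closing observation that the formula is an honest polynomial identity in $y$ then extends it formally, a sanity check the paper does not spell out.
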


\begin{proof}
Let $x_1(y), x_2(y)$ be the solutions for $x$ of $1 - x - bxy + x^2 y^2=0$. It is clear that
\begin{align}
y^2 (x-x_1(y))(x-x_2(y)) \ = \  1 - x - bxy + x^2 y^2.\label{eqn:bbinpolyfromroots}
\end{align}
It is easy to verify that for all $b>2$ and non-negative $y$, $x_1(y)$ and $x_2(y)$ are unequal. Thus
\begin{align}
F(x,y) \ = \  \frac{1}{y^2(x_1(y) - x_2(y))} \left [ \frac{1}{x-x_1(y)} - \frac{1}{x-x_2(y)} \label{eqn:FAsPartialFractions} \right].
\end{align}
We use the geometric series formula to expand \eqref{eqn:FAsPartialFractions}, obtaining
\begin{align}
F(x,y) & \ = \  \frac{1}{y^2(x_2(y) - x_1(y))} \left [ \frac{1/x_1(y)}{1-x/x_1(y)} - \frac{1/x_2(y)}{1-x/x_2(y)} \right]\nonumber\\
   & \ = \  \frac{1}{y^2(x_2(y) - x_1(y))} \sum_{i=0}^\infty \left [ \frac{1}{x_1} \left (\frac{x}{x_1}\right)^i - \frac{1}{x_2} \left ( \frac{x}{x_2}\right)^i \right ].\label{eqn:FAsPowerSeries}
\end{align}

Using the quadratic formula, we find the roots of the quadratic equation in \eqref{eqn:bbinpolyfromroots} are
\begin{align}
x_1(y) & \ = \  \frac{by+1 - \sqrt{(b^2-4)y^2 + 2by + 1}}{2y^2}\nonumber\\
x_2(y) & \ = \  \frac{by+1 + \sqrt{(b^2-4)y^2 + 2by + 1}}{2y^2}\label{eqn:bbinrootsofpoly}.
\end{align}
If we write $F(x,y)$ as a power series of $x$ and define $g_n(y)$ to be the coefficient of $x^n$ in $F(x,y)$, then \eqref{eqn:FAsPowerSeries} implies that
\begin{equation}
g_n(y) \ = \  \frac{1}{y^2(x_2(y)-x_1(y))} \left (\frac{x_2(y)^{n+1} - x_1(y)^{n+1}}{(x_1(y)x_2(y))^{n+1}} \right ).
\end{equation}
Using \eqref{eqn:bbinrootsofpoly} we find
\begin{align}
g_n(y) & \ = \  \frac{y^{2n+2}(x_2(y)^{n+1} - x_1(y)^{n+1})}{\sqrt{5y^2+6y+1}}\nonumber\\
   & \ = \  \frac{\left (by+1 + \sqrt{(b^2-4)y^2 + 2by+1}\right ) ^{n+1} - \left (by+1 - \sqrt{(b^2-4)y^2 + 2by+1}\right )^{n+1}}{2^{n+1}\sqrt{(b^2-4)y^2 + 2by+1}},
\end{align} which completes the proof. \end{proof}

%%%%%%%%%%%%%%%%%%%%%%%%%%%%%%%%%%%%%%%%%%%%%%%%%%%%%%%%%%%%%%%%%%
\subsection{Computing The Mean and Variance}\label{sec:computingmeanvar}

Let $X_n$ be the random variable denoting the number of summands in the unique $b$-bin decomposition of an integer chosen uniformly from $[0, a_{bn})$. The integers in $[0, a_{bn})$ are exactly those integers whose unique decomposition contains only terms from the first $n$ bins.

\begin{prop}The mean number of summands of $b$-bin decompositions, $\mu_n$, for integers in $[0, a_{bn})$ is
\begin{align}
\mu_n \ = \  \frac{\left(b^2+b-4+b \sqrt{b^2+2b-3}\right) n}{\sqrt{b^2+2b-3} \left(1+b+\sqrt{b^2+2b-3}\right)} + O(1).\label{eqn:ValueOfMeanApprox}
\end{align}\label{prop:ValueOfMean}
\end{prop}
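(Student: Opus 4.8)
The plan is to obtain the mean as a logarithmic derivative of the single-variable generating polynomial $g_n(y)=\sum_{k\ge 0}p_{n,k}y^k$. Since the integers in $[0,a_{bn})$ are exactly those whose $b$-bin decomposition uses only terms from the first $n$ bins, the total count of such integers is $g_n(1)=\sum_k p_{n,k}$, while the total number of summands across all of them is $g_n'(1)=\sum_k k\,p_{n,k}$ (the prime denoting $d/dy$). Hence the very first step is to record
\[
\mu_n \ = \ \frac{g_n'(1)}{g_n(1)} \ = \ \left.\frac{d}{dy}\log g_n(y)\right|_{y=1}.
\]

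Next I would substitute the closed form already established for $g_n(y)$. Writing $D(y)=(b^2-4)y^2+2by+1$, $\alpha(y)=by+1+\sqrt{D(y)}$ and $\beta(y)=by+1-\sqrt{D(y)}$, that proposition reads $g_n(y)=(\alpha(y)^{n+1}-\beta(y)^{n+1})/(2^{n+1}\sqrt{D(y)})$. Taking the logarithmic derivative term by term gives
\[
\frac{g_n'(y)}{g_n(y)} \ = \ (n+1)\,\frac{\alpha(y)^n\alpha'(y) - \beta(y)^n\beta'(y)}{\alpha(y)^{n+1}-\beta(y)^{n+1}} \ - \ \frac{D'(y)}{2D(y)},
\]
so everything reduces to evaluating this at $y=1$ and extracting the leading order in $n$.

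Then I would evaluate at $y=1$ and separate orders. Here $D(1)=b^2+2b-3=(b-1)(b+3)>0$ for $b\ge 3$, and a quick check gives $\alpha(1)\beta(1)=(b+1)^2-D(1)=4$ with $\alpha(1)>\beta(1)>0$, so $0<\beta(1)/\alpha(1)<1$. Consequently the first fraction differs from $\alpha'(1)/\alpha(1)$ only by a quantity exponentially small in $n$ (the $\beta$-contributions being suppressed by $(\beta(1)/\alpha(1))^n$), while the $D'(1)/(2D(1))$ term is a bounded constant; both land in the $O(1)$ remainder. This yields $\mu_n=(n+1)\,\alpha'(1)/\alpha(1)+O(1)=\tfrac{\alpha'(1)}{\alpha(1)}n+O(1)$. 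The closed form then follows from the routine computation $\alpha'(1)=b+D'(1)/(2\sqrt{D(1)})$ with $D'(1)=2(b^2+b-4)$, which simplifies to
\[
\frac{\alpha'(1)}{\alpha(1)} \ = \ \frac{b^2+b-4+b\sqrt{b^2+2b-3}}{\sqrt{b^2+2b-3}\,(1+b+\sqrt{b^2+2b-3})},
\]
exactly matching \eqref{eqn:ValueOfMeanApprox}.

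The hard part will not be the algebra but the justification that the subdominant root $\beta$ contributes only an $O(1)$ (indeed exponentially small) correction to the logarithmic derivative, so that the $n$-coefficient is cleanly $\alpha'(1)/\alpha(1)$; this is where the inequality $\beta(1)<\alpha(1)$, guaranteed by $b\ge 3$, is essential. Once that bound is in hand, collecting the constant pieces into $O(1)$ and performing the final simplification are straightforward.
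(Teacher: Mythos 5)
Your proposal is correct and takes essentially the same approach as the paper: both identify $\mu_n = g_n'(1)/g_n(1)$ and evaluate it from the closed form of $g_n(y)$, extracting the coefficient of $n$. If anything, your treatment is more careful than the paper's, whose displayed ``exact'' expression for $g_n'(1)/g_n(1)$ silently drops the subdominant $\beta(1)^{n+1}$ contributions (so it is only valid up to exponentially small corrections), precisely the step your bound via $0<\beta(1)/\alpha(1)<1$ makes rigorous.
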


\begin{proof}
The mean value $\mu_n$ of $X_n$ is
\begin{align}
\mu_n \ = \  \sum_{i=0}^n i P(X_n=i) \ = \  \sum_{i=0}^n i\frac{p_{n,i}}{\sum_{k=0}^n p_{n,k}} \ = \  \frac{g_n'(1)}{g_n(1)}.
\end{align}
Computing the value of $g_n'(1)/g_n(1)$, we get
\begin{align}
\frac{g_n'(1)}{g_n(1)} \: = \  & \frac{4 \sqrt{b^2+2 b-3}+(b^2+2b-3) (b^2+b-4+b \sqrt{b^2+2b-3}) n}{(b^2+2b-3)^{3/2} (1+b+\sqrt{b^2+2b-3})},
\end{align}
which gives us \eqref{eqn:ValueOfMeanApprox}.
\end{proof}

\begin{prop}The variance $\sigma_n^2$ of $X_n$ is
\begin{align}
\sigma_n^2 \ := \  \frac{(b^2+ b - 4) n}{(b^2+ 2 b-3 )^{3/2}} + O(1).\label{eqn:ValueOfVarianceApprox}
\end{align}\label{prop:ValueOfVariance}
\end{prop}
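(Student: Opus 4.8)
The plan is to extract the variance from the generating function $g_n(y)$ exactly as the mean was extracted in Proposition \ref{prop:ValueOfMean}, but now using the first two logarithmic derivatives. Since $P(X_n=k)=p_{n,k}/g_n(1)$, the probability generating function of $X_n$ is $g_n(y)/g_n(1)$, and the standard expression of the variance through the first two factorial moments gives
\[
\sigma_n^2 \;=\; \frac{g_n''(1)}{g_n(1)} + \frac{g_n'(1)}{g_n(1)} - \left(\frac{g_n'(1)}{g_n(1)}\right)^2 \;=\; \big(\log g_n\big)''(1) + \big(\log g_n\big)'(1).
\]
So the first step reduces the proposition to computing $(\log g_n)'(1)$ and $(\log g_n)''(1)$.

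For the second step I would use the Binet-type form of $g_n$ coming out of the previous proposition. Writing $D(y)=\sqrt{(b^2-4)y^2+2by+1}$, $\alpha(y)=by+1+D(y)$ and $\beta(y)=by+1-D(y)$, we have $g_n(y)=(\alpha(y)^{n+1}-\beta(y)^{n+1})/(2^{n+1}D(y))$, hence
\[
\log g_n(y) \;=\; (n+1)\log\alpha(y) - (n+1)\log 2 - \log D(y) + \log\!\left(1-\big(\beta(y)/\alpha(y)\big)^{n+1}\right).
\]
Because $0<\beta(1)/\alpha(1)<1$ for $b\ge 3$ (here $(b+1)^2=b^2+2b+1>b^2+2b-3$, so $\beta(1)>0$), the last term together with its first two $y$-derivatives is $O\!\big(n^2(\beta(1)/\alpha(1))^{n}\big)=O(e^{-cn})$ on a neighborhood of $y=1$, while $-\log D(y)$ is $n$-independent and so contributes only $O(1)$. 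Substituting into the variance identity yields
\[
\sigma_n^2 \;=\; (n+1)\Big[\big(\log\alpha\big)''(1) + \big(\log\alpha\big)'(1)\Big] + O(1),
\]
collapsing everything to the two derivatives of $\log\alpha$ at $y=1$.

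The final step is explicit evaluation. Using $D(1)=\sqrt{b^2+2b-3}$, $D'(1)=(b^2+b-4)/\sqrt{b^2+2b-3}$, $\alpha'(y)=b+D'(y)$, and the analogous second derivatives, one forms $(\log\alpha)'(1)=\alpha'(1)/\alpha(1)$ and $(\log\alpha)''(1)=\alpha''(1)/\alpha(1)-(\alpha'(1)/\alpha(1))^2$ and adds them. As a consistency check, $(\log\alpha)'(1)$ alone should reproduce the coefficient of $n$ in $\mu_n$ from Proposition \ref{prop:ValueOfMean}. I expect the main obstacle to be precisely this simplification: the individual pieces all carry the awkward factor $\alpha(1)=b+1+\sqrt{b^2+2b-3}$ in their denominators, and the content of the claim is that in the combination $(\log\alpha)''(1)+(\log\alpha)'(1)$ these factors cancel entirely, leaving the clean value $(b^2+b-4)/(b^2+2b-3)^{3/2}$. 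Verifying this cancellation — cleanest by clearing the surd $\sqrt{b^2+2b-3}$ and checking the resulting polynomial identity in $b$, or by deferring to a symbolic computation — is the real work; the surrounding argument is routine differentiation.
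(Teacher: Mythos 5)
Your proposal is correct, and it starts from the same identity as the paper --- the paper writes $\sigma_n^2 = \frac{d}{dy}\left[y\,g_n'(y)\right]\big|_{y=1}/g_n(1) - \mu_n^2$, which is exactly your factorial-moment formula $(\log g_n)''(1)+(\log g_n)'(1)$ --- but the execution differs genuinely after that point. The paper simply evaluates this expression \emph{exactly} on the closed form of $g_n$ (``Computing, we get \dots''), producing a massive exact formula in $b$ and $n$, clearly obtained by symbolic computation, from which the leading term $(b^2+b-4)n/(b^2+2b-3)^{3/2}$ is read off. You instead discard the subdominant root first: writing $\log g_n(y) = (n+1)\log\alpha(y) - \log D(y) + \log\bigl(1-(\beta/\alpha)^{n+1}\bigr) + O(1)$ and noting $0<\beta(1)/\alpha(1)<1$, you reduce everything to $\sigma_n^2 = (n+1)\bigl[(\log\alpha)''(1)+(\log\alpha)'(1)\bigr] + O(1)$, a computation that is actually feasible by hand: with $s=\sqrt{b^2+2b-3}$ one finds $D'(1)=(b^2+b-4)/s$ and $D''(1) = \bigl[(b^2-4)s^2-(b^2+b-4)^2\bigr]/s^3 = -4/s^3$, since $(b^2-4)(b^2+2b-3)-(b^2+b-4)^2=-4$, and the surds $\alpha(1)=b+1+s$ do cancel in the combination to give $(b^2+b-4)/s^3$ (I checked this identity; it holds, e.g., $b=3$ gives $8/12^{3/2}$). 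Your method is essentially the expansion the paper itself performs later in the proof of Theorem \ref{thm:gaussianbehaviorbbindecomp}, where the coefficient of $r^2$ in $\log\beta_2(r)$ is $\frac{b^2+b-4}{2(b^2+2b-3)^{3/2}}$; since $\beta_2(r)=\alpha(e^r)e^{-r}/2$ and $\frac{d^2}{dr^2}\log\alpha(e^r)\big|_{r=0} = (\log\alpha)''(1)+(\log\alpha)'(1)$, that expansion is precisely your quantity and serves as an internal cross-check. What each approach buys: the paper's brute-force computation yields the exact value of $\sigma_n^2$ for every $n$ (including the explicit $O(1)$ part), while yours is hand-verifiable, isolates the only nontrivial algebraic fact (the constant $-4$), and unifies the variance computation with the CLT argument; your deferral of the final simplification to ``clearing the surd or symbolic computation'' is no worse than what the paper itself does.
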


\begin{proof}
Similar to the computation of the mean, the variance $\sigma_n$ of $X_n$ can be computed as
\begin{align}
\sigma_n^2 \ = \  \sum_{i=0}^n (i-\mu_n)^2 P(X_n=i) \ = \  \sum_{i=0}^n i^2\frac{p_{n,i}}{\sum_{k=0}^n p_{n,k}} - \mu_n^2 \ = \
\frac{\left . \frac{d}{dy}\left[ y g_n'(y) \right] \right |_{y=1}}{g(1)} - \mu^2\label{eqn:ExprnForVariance}.
\end{align}
Computing \eqref{eqn:ExprnForVariance}, we get
\begin{align}
\sigma_n^2 \ = \  \: &\Big(2 (b^6 n+b^5 (5+\sqrt{-3+2 b+b^2}) n+b^4 (-2+4 \sqrt{-3+2 b+b^2} n)\nonumber\\
& +4 (-9+\sqrt{-3+2 b+b^2}+3 (-1+\sqrt{-3+2 b+b^2}) n)-2 b^3 (1+\sqrt{-3+2 b+b^2}\nonumber\\
&+(12+\sqrt{-3+2 b+b^2}) n)+b (2 (9+7 \sqrt{-3+2 b+b^2})+(35+\sqrt{-3+2 b+b^2}) n)\nonumber\\
&+b^2 (22-(5+16 \sqrt{-3+2 b+b^2}) n))\Big)/\Big((-3+2 b+b^2)^{5/2} (1+b+\sqrt{-3+2 b+b^2})^2\Big),
\end{align}
which gives us \eqref{eqn:ValueOfVarianceApprox}.
\end{proof}

%%%%%%%%%%%%%%%%%%%%%%%%%%%%%%%%%%%%%%%%%%%%%%%%%%%%%%%%%%%%%%%%%%%%%%%%%

\subsection{Gaussian Behavior}\label{sec:gaussianbehaviorbbindecomp}

In Section \ref{section:genfunc} we found a closed form expression for the generating function $g_n(y)$ of the sequence $p_{n,k}$ for any $n$. Our expansion of the generating function $g_n(y)$ allows us to explicitly find the moment generating function of $X_n$, which converges in distribution to a Gaussian and thus proves Theorem \ref{thm:gaussianbehaviorbbindecomp} (that the distribution of the number of summands converges to a Gaussian). Before we prove Theorem \ref{thm:gaussianbehaviorbbindecomp}, we need a lemma.

\begin{lem}The moment generating function $M_{Y_n}(t)$ of $Y_n$ is
\begin{align}
M_{Y_n}(t) \ = \  \mathbb{E}(e^{tY_n}) \ = \  \: \frac{g_n(e^{t/\sigma_n})e^{-t\mu_n/\sigma_n}}{g_n(1)}.
\end{align}
\label{lem:momgenfun}
\end{lem}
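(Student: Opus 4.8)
The plan is to compute the moment generating function of $Y_n$ directly from its definition as a normalization of $X_n$, reducing everything to the already-established generating function $g_n(y)$. Recall that $Y_n = (X_n - \mu_n)/\sigma_n$, and that $X_n$ takes the value $i$ with probability $P(X_n = i) = p_{n,i}/\sum_{k=0}^n p_{n,k}$. Since $g_n(y) = \sum_{k\geq 0} p_{n,k}y^k$, the denominator $\sum_{k=0}^n p_{n,k}$ is precisely $g_n(1)$, so that $P(X_n = i) = p_{n,i}/g_n(1)$.

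First I would write out $M_{Y_n}(t) = \mathbb{E}(e^{tY_n})$ using this explicit distribution. Substituting $Y_n = (X_n-\mu_n)/\sigma_n$ gives
\begin{align}
M_{Y_n}(t) \ = \ \mathbb{E}\left(e^{t(X_n-\mu_n)/\sigma_n}\right) \ = \ e^{-t\mu_n/\sigma_n}\,\mathbb{E}\left(e^{(t/\sigma_n)X_n}\right),
\end{align}
where the scalar factor $e^{-t\mu_n/\sigma_n}$ pulls out of the expectation because $\mu_n$ and $\sigma_n$ are constants. The next step is to expand the remaining expectation over the distribution of $X_n$:
\begin{align}
\mathbb{E}\left(e^{(t/\sigma_n)X_n}\right) \ = \ \sum_{i=0}^n e^{(t/\sigma_n)i}\,\frac{p_{n,i}}{g_n(1)} \ = \ \frac{1}{g_n(1)}\sum_{i=0}^n p_{n,i}\left(e^{t/\sigma_n}\right)^i.
\end{align}
The key observation is that the final sum is exactly $g_n(y)$ evaluated at $y = e^{t/\sigma_n}$, since $g_n(y) = \sum_{k\geq 0}p_{n,k}y^k$ and $p_{n,k}=0$ for $k > n$. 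Combining these two displays yields the claimed formula $M_{Y_n}(t) = g_n(e^{t/\sigma_n})e^{-t\mu_n/\sigma_n}/g_n(1)$.

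This proof is essentially a bookkeeping exercise, so I do not anticipate a genuine obstacle; the only points requiring care are confirming that $g_n(1) = \sum_{k=0}^n p_{n,k}$ correctly normalizes the probabilities (which follows because the integers in $[0,a_{bn})$ are exactly those with decompositions drawn from the first $n$ bins, so $\sum_k p_{n,k} = a_{bn}$ equals the total count) and that the substitution $y = e^{t/\sigma_n}$ into the polynomial $g_n(y)$ is legitimate. The heavier analytic work — taking $n\to\infty$, expanding the closed form of $g_n(y)$, and verifying that $M_{Y_n}(t)$ converges pointwise to $e^{t^2/2}$ so that the L\'evy continuity theorem gives convergence to the standard normal — is deferred to the proof of Theorem \ref{thm:gaussianbehaviorbbindecomp} itself and does not belong to this lemma.
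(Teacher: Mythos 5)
Your proposal is correct and matches the paper's proof essentially verbatim: both arguments expand $\mathbb{E}(e^{tY_n})$ over the distribution $P(X_n = k) = p_{n,k}/g_n(1)$ and recognize the resulting sum as $g_n$ evaluated at $e^{t/\sigma_n}$, with the factor $e^{-t\mu_n/\sigma_n}$ pulled out. The only cosmetic difference is that you factor out the constant before expanding the sum while the paper carries it inside; your side remark that $\sum_k p_{n,k} = a_{bn}$ by uniqueness of decompositions is a correct (and slightly more explicit) justification of the normalization.
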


\begin{proof}
Our goal is to study the distribution of $X_n$ as $n\to \infty$, where \begin{align}P(X_n=k) \ = \   \frac{p_{n,k}}{\sum_{k\geq0} p_{n,k}}.\end{align}
Observe that
\begin{align}
\frac{g_n(e^t)}{g_n(1)} \ = \  \sum_{k\geq0} \frac{p_{n,k}e^{tk}}{\sum_{k\geq0} p_{n,k}} \ = \  \mathbb{E}(e^{tX}).
\end{align}
If we let $\mu_n$ and $\sigma_n$ be the mean and standard deviation of $X_n$ respectively, then we can normalize the generating function by letting $Y_n = \frac{X_n-\mu_n}{\sigma_n}$, which yields
\begin{align}
    M_{Y_n}(t) \ = \  \mathbb{E}(e^{tY_n}) \ = \  \sum_{k\geq0} \frac{p_{n,k}e^{t\frac{(k-\mu_n)}{\sigma_n}}}{\sum_{k\geq0}p_{n,k}} \ = \  \frac{g_n(e^{t/\sigma_n})e^{-t\mu_n/\sigma_n}}{g_n(1)}.
\label{eqn:generalmomentgenfun}
\end{align}
\end{proof}

We now prove Theorem \ref{thm:gaussianbehaviorbbindecomp}, which says that $Y_n$ converges in distribution to a normal distribution, by showing that the moment generating function $M_{Y_n}(t)$ of $Y_n$ converges pointwise to the standard normal as $n\to\infty$ (see for example \cite{Wi}).

\begin{proof}[Proof of Theorem \ref{thm:gaussianbehaviorbbindecomp}]
For convenience we set $r := t/\sigma_n$. Since $\sigma_n = c\sqrt{n} + o(\sqrt{n})$, where $c$ is some positive constant, we know that $r \to 0$ as $n\to \infty$ for a fixed value of $t$. Thus we may expand functions of $r$, such as $e^r$, using their power series representations. We begin by manipulating $g_n(e^r)$:
\begin{align}
g_n(e^r) \: = \  & \frac{(be^r + 1 + \sqrt{(b^2-4)e^{2r} + 2be^r + 1})^{n+1} - (be^r + 1 - \sqrt{(b^2-4)e^{2r} + 2be^r + 1})^{n+1}}{2^{n+1}\sqrt{(b^2-4)e^{2r} + 2be^r + 1}}.\label{eqn:gnerDef}
\end{align}

Let $\delta_1 = (be^r+ 1 - \sqrt{(b^2-4)e^{2r} + 2be^{t/\sigma_n} + 1})/2$. For large $n$, we have $e^r = 1 + o(1)$. We can write $\delta_1$ as $(b+1-\sqrt{(b+1)^2 - 4})/2 + o(1)$. Now, let $\delta_1' =(b+ 1 + \sqrt{(b+1)^2-4})/2$. Notice $\delta_1\delta_1' = 1 + o(1)$. Since $\delta_1' > 1$, it is clear that $0<\delta_1 < 1$. Hence $\delta_1^{n+1}$ is $o(1)$, and therefore
\begin{align}
g_n(e^r) \ = \  & \frac{\left (\frac{b}2 e^r + \frac12 + \frac12 \sqrt{(b^2-4)e^{2r} + 2be^r + 1} \right)^{n+1} + o(1)}{\sqrt{(b^2-4)e^{2r} + 2be^r + 1}}.
\end{align}
To focus on individual parts of this equation, we define
\begin{align}
\beta_1(r) \: = \  &\ \sqrt{b^2+2 b e^{-r}+e^{-2 r}-4}\label{eqn:beta1}\\
\beta_2(r) \: = \  &\ \frac{b}{2}+\frac{e^{-r}}{2} + \frac{\beta_1(r)}2 \label{eqn:beta2}.
\end{align}
We can now write $g_n(y)$ as
\begin{align}
g_n(e^r) \ = \  \frac{ e^{rn} \beta_2(r)^{n+1}}{\beta_1(r)} + o(1).
\end{align}
We are evaluating $M_{Y_n}(t)$ and require the value of $\log(\beta_2(r))$. We can expand $\beta_2$ and find
\begin{align}
\beta_2(r) \ = \  & \  \frac{1}{2} \left(\sqrt{b^2+2 b-3}+b+1\right)+\left(\frac{-b-1}{2 \sqrt{b^2+2 b-3}}-\frac{1}{2}\right) r \nonumber\\
& +\left(\frac{b^3+3 b^2-b-7}{4 \left(b^2+2 b-3\right)^{3/2}}+\frac{1}{4}\right) r^2+O(r^3)\nonumber\\
\log(\beta_2(r)) \ = \  & \: \log \left(\frac{1}{2} \left(\sqrt{b^2+2 b-3}+b+1\right)\right)-\frac{r}{\sqrt{b^2+2 b-3}} \nonumber\\
 & +\frac{\left(b^2+b-4\right) r^2}{2 \left(b^2+2 b-3\right)^{3/2}}+O\left(r^3\right).
\end{align}
From Lemma \ref{lem:momgenfun}, we have
\begin{align}
M_{Y_n}(t) \ = \  & \  \frac{g_n(e^{t/\sigma_n})e^{-t\mu_n/\sigma_n}}{g_n(1)}\nonumber\\
\log(M_{Y_n}(t)) \ = \  & \: \log(g_n(e^{t/\sigma_n})) -t\mu_n/\sigma_n - \log(g_n(1)).
\end{align}
Because $e^{rn} \beta_2(r)^{n+1}\beta_1(r)^{-1} > 0$, we may move the error term in $g_n(e^r)$ outside the logarithm and simplify:
% $\log(g_n(e^r)) = \log(e^{rn} \beta_2(r)^{n+1}) - \log(\beta_1(r)) + o(1)$.
\begin{align}
\log(M_{Y_n}(t)) \ = \  & \  tn/\sigma_n + (n+1) \log(\beta_2(t/\sigma_n)) - \log(\beta_1(t/\sigma_n)) - t\mu_n/\sigma_n - \log(g_n(1)) + o(1)\nonumber\\
\ = \  & \  tn/\sigma_n + n \log(\beta_2(t/\sigma_n)) + \log(\beta_2(0) + o(1)) - \log(\beta_1(0) + o(1))\nonumber\\
& \  - t\mu_n/\sigma_n - \log(g_n(1)) + o(1).
\end{align}
Plugging in our values of $\beta_1(r)$ from equation \eqref{eqn:beta1}, $\beta_2(r)$ from equation \eqref{eqn:beta2}, $\mu_n$ from Proposition \ref{prop:ValueOfMean}, $\sigma_n$ from Proposition \ref{prop:ValueOfVariance}, $g_n(1)$ from equation \eqref{eqn:gnerDef}, and recalling $r = t/\sigma_n$, we get
\begin{align}
\log(M_{Y_n}(t)) \ = \  & \frac{t^2}2 + O\left(n\left(\frac{t}{\sigma_n}\right)^3\right) \ = \  \frac{t^2}2 + o(1).\end{align}

The moment generating function for a normal distribution is $e^{t\mu + \frac12 \sigma^2t^2}$. Thus, $M_{Y_n}(t)$ pointwise converges to the moment generating function of the standard normal distribution as $n\to\infty$, which from standard probability machinery implies the densities converge to a standard normal.
\end{proof}

%%%%%%%%%%%%%%%%%%%%%%%%%%%%%%%%%%%%%%%%%%%%%%%%%%%%%%%%%%%%%%%%%%%%%%%%%%%%%%%%%%%%%%%%%%%%%%%%%%%%%%%%%%%%%%%%%%%%%%%%%%%%%%%%%%%%%%%%%%%%%%%%%%%%
%%%%%%%%%%%%%%%%%%%%%%%%%%%%%%%%%%%%%%%%%%%%%%%%%%%%%%%%%%%%%%%%%%%%%%%%%%%%%%%%%%%%%%%%%%%%%%%%%%%%%%%%%%%%%%%%%%%%%%%%%%%%%%%%%%%%%%%%%%%%%%%%%%%%
%%%%%%%%%%%%%%%%%%%%%%%%%%%%%%%%%%%%%%%%%%%%%%%%%%%%%%%%%%%%%%%%%%%%%%%%%%%%%%%%%%%%%%%%%%%%%%%%%%%%%%%%%%%%%%%%%%%%%%%%%%%%%%%%%%%%%%%%%%%%%%%%%%%%
%%%%%%%%%%%%%%%%%%%%%%%%%%%%%%%%%%%%%%%%%%%%%%%%%%%%%%%%%%%%%%%%%%%%%%%%%%%%%%%%%%%%%%%%%%%%%%%%%%%%%%%%%%%%%%%%%%%%%%%%%%%%%%%%%%%%%%%%%%%%%%%%%%%%
\section{Conclusion and Future Questions}
\label{section:conclusion}

Encoding notions of legal decomposition as functions provides a new approach to decomposition problems. We were able to generalize Zeckendorf's (and other) theorems to new classes of sequences which were not amenable to previous techniques. Our work leads to further questions that we hope to return to at a later date. These include:
\begin{enumerate}
\item Our functions $f$ encode notions of legal decomposition where the forbidden terms associated with any $a_n$ are contiguous on the left of $a_n$. Are there weaker conditions on the notion of legal decomposition under which it is possible to construct a sequence $\{a_n\}$ so that all positive integers have unique decompositions using $\{a_n\}$?

\item The distribution of gaps between summands for Zeckendorf decompositions were studied in \cite{BBGILMT}. What is the distribution of gaps (i.e., the difference in indices,  $n_j - n_{j+1}$ in decomposition of $x=\sum_{i=1}^k a_{n_i}$, where $\{n_i\}$ is a decreasing sequence) for the Factorial Number System and $b$-bin decompositions? Can this be studied in general for all $f$-decompositions?

\item The $b$-bin decompositions are examples of $f$-decompositions with periodic functions $f$. Is it true for all periodic $f$ that the number of summands in $f$-decompositions tend to a normal distribution for integers picked from an appropriate growing interval? Under what conditions on $f$ does Gaussian behavior occur?

\item One could attempt to prove results about the number of summands by considering Markov processes where the transition probabilities are related to the $f$-legal decompositions. Such an approach quickly leads to a concentration result for the number of summands (this follows from standard stationarity results), but not to Gaussian behavior. As this method does not weigh all numbers uniformly, however, we do not pursue those investigations here.

\end{enumerate}

%%%%%%%%%%%%%%%%%%%%%%%%%%%%%%%%%%%%%%%%%%%%%%%%%%%%%%%%%%%%%%%%%%%%%%%%%%%%%%%%%%%%%%%%%%%%%%%%%%%%%%%%%%%%%%%%%%%%%%%%%%%%%%%%%%%%%%%%%%%%%%%%%%%%
%%%%%%%%%%%%%%%%%%%%%%%%%%%%%%%%%%%%%%%%%%%%%%%%%%%%%%%%%%%%%%%%%%%%%%%%%%%%%%%%%%%%%%%%%%%%%%%%%%%%%%%%%%%%%%%%%%%%%%%%%%%%%%%%%%%%%%%%%%%%%%%%%%%%
%%%%%%%%%%%%%%%%%%%%%%%%%%%%%%%%%%%%%%%%%%%%%%%%%%%%%%%%%%%%%%%%%%%%%%%%%%%%%%%%%%%%%%%%%%%%%%%%%%%%%%%%%%%%%%%%%%%%%%%%%%%%%%%%%%%%%%%%%%%%%%%%%%%%

\appendix

\section{Example of Linear Recurrence}\label{appendix:exampleoflinearrec}

Theorem \ref{thm:mainfseqrecurrence} uses an algorithm to prove that the $f$-sequences associated to periodic functions satisfy linear recurrence relations; we go through that algorithm for an example below.

Consider the case of 3-bin decompositions. We have a periodic function $f:\N_0\to\N_0$ defined by
\begin{align}
f(n)\ =\ \begin{cases}
1, & \text{if } n \equiv 0 \bmod 3\\
1, & \text{if } n \equiv 1 \bmod 3\\
2, & \text{if } n \equiv 2 \bmod 3.
\end{cases}
\end{align}
The associated $f$-sequence is
\begin{equation}  \{a_n\}_{n=0}^\infty = \{1, 2, 3, 4, 7, 11, 15, 26, 41, 56, 97, 153, 209, 362, 571, 780, 1351, 2131, 2911, \dots \}.\end{equation}
The subsequences $\{a_{i,n}\}_{n=0}^\infty = \{a_{3n + i}\}_{n=0}^\infty$ for $i \in \{0,1,2\}$ are
\begin{align}
\{a_{0, n}\} \ = \  & \{ 1, 4, 15, 56, 209, 780, 2911, \dots \} \\
\{a_{1, n}\} \ = \  & \{ 2, 7, 26, 97, 362, 1351, 5042, \dots \} \\
\{a_{2, n}\} \ = \  & \{ 3, 11, 41, 153, 571, 2131, 7953, \dots \}.
\end{align}

\subsection{Subsequence $\{a_{0,n}\}$}

For any $n \equiv 0 \bmod 3$, we have
\begin{align}
a_n \ = \  & \  a_{n-1} + a_{n-3}\nonumber\\
a_{n-1} \ = \  & \  a_{n-2} + a_{n-3}\nonumber\\
a_{n-3} \ = \  & \  a_{n-4} + a_{n-5}.
\end{align}
These expressions follow a periodic pattern. Their vector representation is
\begin{align}
\begin{array}{r rrr rrr rrr r l}
& * & & & * & & & * & & &* \\
\vec v_0  \ = \  \lbrack & 1 & -1 & 0 & -1  & 0  & 0 & 0 & 0 & 0 & 0 & \rbrack\ \\
\vec v_1 \ = \  \lbrack & 0 &  1 & -1 & -1 & 0  & 0 & 0 & 0 & 0 & 0 & \rbrack\ \\
\vec v_2 \ = \  \lbrack & 0 & 0  &  1 & -1 & -1 & 0 & 0 & 0 & 0 & 0 & \rbrack\ \\
\vec v_3  \ = \  \lbrack & 0 & 0 & 0 & 1 & -1 & 0 & -1  & 0  & 0 & 0 & \rbrack\ \\
\vec v_4 \ = \  \lbrack & 0 & 0 & 0 & 0 &  1 & -1 & -1 & 0  & 0 & 0 & \rbrack\ \\
\vec v_5 \ = \  \lbrack & 0 & 0 & 0 & 0 & 0  &  1 & -1 & -1 & 0 & 0 & \rbrack. \\
\end{array}
\end{align}

Recall from Theorem \ref{thm:mainfseqrecurrence} that we 0-index coordinates of vectors in this algorithm (thus the first coordinate has index 0, the second has index 1 and so on). The stars ($*$) indicate indices that are multiples of $b=3$. All non-zero coordinates that do not fall under these indices are considered ``bad coordinates.''

We begin with $\vec w_0 = \vec v_0 = [ 1 \ -1 \ \ 0 \ -1  \ \ 0  \ \ 0 \ \ 0 \ \ 0 \ \ 0 \ \ 0]$. We use the vectors $\vec u_i$ to keep track of the coordinates between those whose indices are multiples of $b$. In $\vec w_0$, such coordinates are only between indices 0 and $b=3$ (exclusive). Thus $\vec u_0 = [ -1 \ \ 0 ]$.

Using other vectors $\vec v_i$, we move the bad coordinates in $\vec w_0$ to the right, so that the indices between 0 and $b=3$ (exclusive) are all zero. This yields $\vec w_1 = \vec v_0 + \vec v_1 + \vec v_2 = [ 1 \ \ 0 \ \ 0 \ -3  \ -1  \ \ 0 \ \ 0 \ \ 0 \ \ 0 \ \ 0]$. The bad coordinates are only between $b$ and $2b$ (exclusive). Hence $\vec u_1 = [ -1 \ \ 0]$.

We move the bad coordinates to the right once more: $$\vec w_2\ =\ \vec w_1 + \vec v_4 + \vec v_5\ =\ [ 1 \ \ 0 \ \ 0 \ -3  \ \  0 \ \ 0 \ \ -2 \ \ -1 \ \ 0 \ \ 0 ].$$ Here we have $\vec u_2 = [ -1 \ \ 0 ]$.

We have three vectors ($\vec u_0$, $\vec u_1$, and $\vec u_2$) of dimension 2. Therefore, there exists a non-trivial solution to $\sum_{i=0}^2 \lambda_i \vec u_i = 0$. One such solution is $\lambda_0 = -1$, $\lambda_1 = 1$, $\lambda_3 = 0$.

Shifting the $\vec w_i$ vectors using $T_b:\R^{10}\to\R^{10}$ so that the bad coordinates line up, we obtain

\begin{align}
\begin{array}{r rrr rrr rrr r l}
& * & & & * & & & * & & & * &  \\
T_b^2(\vec w_0) \ = \  \lbrack & 0 & 0 & 0 & 0  & 0 & 0 & 1 & -1 & 0 & -1 &  \rbrack \\
T_b(\vec w_1) \ = \  \lbrack & 0 & 0 & 0 & 1  & 0 & 0 & -3 & -1 &  0 & 0 & \rbrack\\
\vec w_2  \ = \  \lbrack & 1 & 0 & 0 & -3 & 0 & 0 & -2 & -1 & 0 & 0 & \rbrack. \\
\end{array}
\end{align}

Now $\sum_{i=0}^b \lambda_i T_b^{b-1-i}(\vec w_i) = - T_b^2 (\vec w_0) + T_b(\vec w_1) = [ 0 \ \ 0 \ \ 0 \ \ 1 \ \ 0 \ \ 0 \ -4 \ \ 0 \ \ 0 \ \ 1]$. Thus for all $n \equiv 0 \bmod 3$, we have $a_n = 4a_{n-3} - a_{n-6}$. The subsequence $\{a_{0, n}\}$ satisfies the recurrence relation $a_{0,n} = 4a_{0,n-1} - a_{0, n-2}$.

\subsection{Common recurrence relation}

Following the same algorithm for $n\equiv 1 \bmod 3$ and $n\equiv 2 \bmod 3$, we find the same relations (i.e., $a_{1,n} = 4a_{1,n-1} - a_{1, n-2}$ and $a_{2,n} = 4a_{2,n-1} - a_{2, n-2}$) as we did for $n\equiv 0 \bmod 3$.

In this case, we do not require Lemma \ref{lem:interlacedsequences} to find a common recurrence relation because we have found the same recurrence relation for all subsequences.

Since all subsequences satisfy $s_n = 4s_{n-1} - s_{n-2}$, the interlaced sequence $\{a_n\}$ (i.e., the $f$-sequence) satisfies $s_n = 4s_{n-3} - s_{n-6}$.

%%%%%%%%%%%%%%%%%%%%%%%%%%%%%%%%%%%%%%%%%%%%%%%%%%%%%%%%%%%%%%%%%%%%%%%%%%%%%%%%%%%%%%%%%%%%%%%%%%%%%%%%%%%%%%%%%%%%%%%%%%%%%%%%%%%%%%%%%%%%%%%%%%%%
%%%%%%%%%%%%%%%%%%%%%%%%%%%%%%%%%%%%%%%%%%%%%%%%%%%%%%%%%%%%%%%%%%%%%%%%%%%%%%%%%%%%%%%%%%%%%%%%%%%%%%%%%%%%%%%%%%%%%%%%%%%%%%%%%%%%%%%%%%%%%%%%%%%%
%%%%%%%%%%%%%%%%%%%%%%%%%%%%%%%%%%%%%%%%%%%%%%%%%%%%%%%%%%%%%%%%%%%%%%%%%%%%%%%%%%%%%%%%%%%%%%%%%%%%%%%%%%%%%%%%%%%%%%%%%%%%%%%%%%%%%%%%%%%%%%%%%%%%
%%%%%%%%%%%%%%%%%%%%%%%%%%%%%%%%%%%%%%%%%%%%%%%%%%%%%%%%%%%%%%%%%%%%%%%%%%%%%%%%%%%%%%%%%%%%%%%%%%%%%%%%%%%%%%%%%%%%%%%%%%%%%%%%%%%%%%%%%%%%%%%%%%%%
\section{Negative Coefficients in Linear Recurrence}\label{appendix:3binnegativecoeffients}

In \S\ref{sec:zeckbbindecomp} we claimed that the $f$-sequence associated to 3-bin decompositions does not satisfy any linear homogeneous recurrence relation with constant non-negative coefficients. We now give the proof.

First, we prove that each linearly recurrent sequence has a ``minimal linear recurrence relation''. This is a key step in proving that 3-bin decompositions fall outside the scope of previously studied sequences with legal decompositions (the Positive Linear Recurrence Relation Decompositions of \cite{MW1,MW2} or the $G$-ary Representations in \cite{St}).

\begin{lem} For any linearly recurrent sequence $\{a_n\}_{n=0}^\infty$, there exists a linear recurrence relation $\sum_{i=0}^k c_i s_{n-i} = 0$ so that the characteristic polynomial of any linear recurrence relation that $\{a_n\}$ satisfies is multiple of the characteristic polynomial of $\sum_{i=0}^k c_i s_{n-i}$.  \label{lem:minimalrecurrence}
\end{lem}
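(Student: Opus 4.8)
The plan is to recast linear recurrences in terms of the shift operator and then exploit Euclidean division in $\R[x]$, so that the desired ``minimal'' recurrence is literally a lowest-degree annihilating polynomial. Let $E$ be the left shift on real sequences, $(Es)_n = s_{n+1}$, and for a polynomial $p(x) = \sum_{i=0}^k c_i x^{k-i}$ set $p(E) = \sum_{i=0}^k c_i E^{k-i}$. A one-line computation gives $(p(E)a)_n = \sum_{i=0}^k c_i a_{n+k-i}$, so $\{a_n\}$ satisfies the recurrence $\sum_{i=0}^k c_i s_{n-i} = 0$ (for all $n \ge k$) if and only if $p(E)a$ is the zero sequence; in this language $p$ is exactly the characteristic polynomial of that recurrence. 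The algebraic fact I would record first is that $x \mapsto E$ is a ring homomorphism, so $(pq)(E) = p(E)q(E)$ for all $p,q \in \R[x]$.

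First I would introduce $\mathcal{I} = \{p \in \R[x] : p(E)a = 0\}$, the set of characteristic polynomials annihilating $\{a_n\}$ (together with $0$). Since $\{a_n\}$ is linearly recurrent, $\mathcal{I}$ contains a nonzero polynomial, so I may choose $m \in \mathcal{I}$ of minimal degree, normalized to be monic. Writing $m(x) = \sum_{i=0}^k c_i x^{k-i}$ with $c_0 = 1$, the associated relation $\sum_{i=0}^k c_i s_{n-i} = 0$ is the candidate minimal linear recurrence relation, and its characteristic polynomial is $m$.

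Next I would establish divisibility by polynomial division. Given any recurrence satisfied by $\{a_n\}$, with characteristic polynomial $p \in \mathcal{I}$, the division algorithm in $\R[x]$ (valid because $m$ is monic) yields $p = qm + r$ with $r = 0$ or $\deg r < \deg m$. Applying the homomorphism, $r(E)a = p(E)a - q(E)\big(m(E)a\big) = \mathbf{0} - q(E)\mathbf{0} = \mathbf{0}$, so $r \in \mathcal{I}$; minimality of $\deg m$ forces $r = 0$, whence $m \mid p$. Thus the characteristic polynomial of every recurrence that $\{a_n\}$ satisfies is a multiple of $m$, which is precisely the claim. Equivalently, $\mathcal{I}$ is an ideal in the principal ideal domain $\R[x]$ and $m$ is its monic generator, but I prefer the division argument as it is fully self-contained.

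I expect the only delicate points to be bookkeeping rather than substance: aligning the index range on which the recurrence must hold with the statement ``$p(E)a = \mathbf{0}$'', and remembering that $E$ is not injective on one-sided sequences, so $m$ may legitimately carry a factor of $x$ (i.e., the minimal recurrence need not actually involve $s_{n-k}$). The division argument is robust to both, since it never inverts $E$ and uses only that $q(E)$ sends the zero sequence to the zero sequence; the single identity that must be stated carefully is $(pq)(E) = p(E)q(E)$, on which the vanishing of $r(E)a$ rests.
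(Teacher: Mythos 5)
Your proof is correct and takes essentially the same route as the paper: choose an annihilating recurrence of minimal degree (your monic $m$, the paper's well-ordering argument), apply Euclidean division to the characteristic polynomial of any other recurrence, observe that the remainder also annihilates $\{a_n\}$, and conclude from minimality that the remainder vanishes. Your shift-operator identity $(pq)(E) = p(E)q(E)$ is just a cleaner formalization of the paper's statement that the recurrence with characteristic polynomial $q(x)c(x)$ is ``a linear combination of index-shifted versions'' of the minimal one.
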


\begin{proof}
Let $\{s_n\}$ be a placeholder sequence. Let $\mathcal{R}$ be the set of all recurrence relations that $\{a_n\}$ satisfies. Let $L:\mathcal{R}\to\N$ be a function, where $L(r)$ is the degree of the recurrence relation $r$ for any $r\in \mathcal{R}$. By the Well-Ordering Principle, $L(\mathcal{R})$ contains its minimum. Let $k = \min L(\mathcal{R})$ and let $\sum_{i=0}^k c_i s_{n-i}=0$ be a recurrence relation in $\mathcal{R}$ of degree $k$. Notice $k>0$.

Consider any recurrence relation $\sum_{i=0}^l p_i s_{n-i}$ in $\mathcal{R}$. Let $p(x) = \sum_{i=0}^l p_i x^{l-i}$ be the characteristic polynomial of this recurrence relation. Let $c(x) = \sum_{i=0}^k c_i x^{k-i}$ be the characteristic polynomial of $\sum_{i=0}^k c_i s_{n-i}=0$. By the division algorithm for polynomials, there exist polynomials $q(x), r(x)$, where $r(x)$ has degree less than $k$, so that $p(x) = q(x)c(x)+r(x)$.

Assume for contradiction that $r(x) \not \equiv 0$. We know $\{a_n\}$ satisfies the recurrence relation whose characteristic polynomial is $p(x)$. The recurrence relation whose characteristic polynomial is $q(x)c(x)$ is a linear combination of index-shifted versions of $\sum_{i=0}^k c_i s_{n-i}=0$ and hence $\{a_n\}$ satisfies the recurrence relation whose characteristic polynomial is $q(x)c(x)$. Since $\{a_n\}$ satisfies both these recurrence relations, it has to satisfy their difference, whose characteristic polynomial is $r(x)$. However, $r(x)$ has degree less than $k$, which contradicts that $\min L(\mathcal{R}) = k$.
\end{proof}

We are now ready to prove our claim about 3-bin decompositions.

\begin{prop}
The $f$-sequence associated with 3-bin decompositions satisfies no linear homogeneous recurrence relation with non-negative integer coefficients.
\end{prop}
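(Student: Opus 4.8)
The plan is to argue by contradiction via characteristic polynomials. From \S\ref{sec:zeckbbindecomp} the $f$-sequence satisfies $a_n = (b+1)a_{n-b} - a_{n-2b}$, which for $b=3$ reads $a_n = 4a_{n-3} - a_{n-6}$; its characteristic polynomial is $c(x) = x^6 - 4x^3 + 1$. Suppose, for contradiction, that $\{a_n\}$ also satisfied some linear homogeneous recurrence with non-negative integer coefficients, say $s_n = b_1 s_{n-1} + \cdots + b_N s_{n-N}$ with every $b_i \ge 0$; its characteristic polynomial is $p(x) = x^N - b_1 x^{N-1} - \cdots - b_N$. I want to show that these two recurrences cannot coexist.

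First I would pin down the minimal recurrence. By Lemma \ref{lem:minimalrecurrence} the characteristic polynomial of \emph{every} recurrence that $\{a_n\}$ satisfies is a multiple of one fixed minimal polynomial $c_{\min}$, and $c_{\min}\mid c(x)$. I would then check that $c(x)=x^6-4x^3+1$ is irreducible over $\mathbb{Q}$: setting $y=x^3$ gives $y^2-4y+1$, whose roots $2\pm\sqrt{3}$ are irrational, and a short rational-root computation shows $2+\sqrt{3}$ is not a cube in $\mathbb{Q}(\sqrt{3})$, so $[\mathbb{Q}(\alpha):\mathbb{Q}]=6$ for $\alpha=(2+\sqrt{3})^{1/3}$. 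Since $c_{\min}$ may be taken in $\mathbb{Q}[x]$ (the recurrences annihilating a rational sequence form an ideal of $\mathbb{Q}[x]$), divides the irreducible $c(x)$, and is non-constant, it must equal $c(x)$. Hence $c(x)\mid p(x)$, and in particular the two roots $\alpha=(2+\sqrt{3})^{1/3}$ and $\beta=(2-\sqrt{3})^{1/3}$ of $c$, both positive and distinct, are roots of $p$.

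The decisive observation is that a polynomial of the form $p(x)=x^N - b_1x^{N-1}-\cdots-b_N$ with all $b_i\ge 0$ (not all zero) has at most one positive real root: for $x>0$ we may write $p(x)=x^N\bigl(1-\sum_{i=1}^N b_i x^{-i}\bigr)$, and $\sum_i b_i x^{-i}$ is strictly decreasing on $(0,\infty)$, so $1-\sum_i b_i x^{-i}$ changes sign at most once there. This contradicts the fact that $p$ has the two distinct positive roots $\alpha$ and $\beta$, which finishes the proof.

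I expect the main obstacle to be the middle step: justifying that $\beta$ — the \emph{subdominant} positive real root — is genuinely forced to be a root of any recurrence $\{a_n\}$ satisfies. This is exactly what irreducibility of $x^6-4x^3+1$ over $\mathbb{Q}$ (together with the rationality of the minimal polynomial) provides; without it one controls only the dominant root $\alpha$, via the growth rate $a_n\asymp\alpha^n$, which is insufficient. The closing ``at most one positive root'' lemma and the explicit determination of $\alpha,\beta$ are routine by comparison.
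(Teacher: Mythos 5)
Your proof is correct, but it diverges from the paper's at the decisive step. Both arguments share the same setup: invoke Lemma \ref{lem:minimalrecurrence} together with the irreducibility of $x^6-4x^3+1$ over $\Q$ to conclude that the characteristic polynomial of \emph{any} recurrence satisfied by the sequence is a polynomial multiple of $x^6-4x^3+1$ (the paper, like you, asserts irreducibility without detailed proof, so your sketch via $y^2-4y+1$ and the non-cube check in $\Q(\sqrt{3})$ is at worst equally terse; if you flesh it out, note that a cube root of $2+\sqrt{3}$ in $\Q(\sqrt{3})$ would be an algebraic integer, hence in $\Z[\sqrt{3}]$, where a norm computation settles it). Where you part ways: the paper shows \emph{directly by induction on coefficients} that no multiple $\bigl(\sum_{j=0}^k p_j x^j\bigr)(x^6-4x^3+1)$ can have the sign pattern $c_{k+6}>0$, $c_i\le 0$ for $i<k+6$ --- one tracks the subsequence $p_t, p_{t+3}, p_{t+6},\dots$ and shows it is forced to stay strictly negative past index $k$, a contradiction. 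You instead argue by root location: $x^6-4x^3+1$ has the two distinct positive real roots $(2\pm\sqrt{3})^{1/3}$, while any polynomial $x^N-\sum_{i=1}^N b_i x^{N-i}$ with $b_i\ge 0$ not all zero has at most one positive root, since $1-\sum_i b_i x^{-i}$ is strictly increasing on $(0,\infty)$ (and the degenerate case $b_i\equiv 0$ is excluded because $x^6-4x^3+1$ does not divide $x^N$, as its constant term is $1$). Your monotonicity lemma is sound, and you correctly identify the crux: irreducibility is what forces the subdominant root $\beta$ into every admissible characteristic polynomial, since growth-rate considerations alone control only the dominant root. The trade-off: the paper's argument is self-contained, elementary coefficient bookkeeping tailored to this one polynomial, whereas yours is shorter, more conceptual, and generalizes immediately --- any $f$-sequence whose minimal characteristic polynomial has two distinct positive real roots satisfies no recurrence with non-negative coefficients --- which makes it arguably the better proof to record.
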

\begin{proof}
        The characteristic polynomial of $s_n = 4s_{n-3} - s_{n-6}$ is $x^6 - 4x^3 + 1$, which is irreducible in $\Q[x]$. Hence the minimal linear recurrence relation for the 3-bin sequence (see Appendix \ref{appendix:exampleoflinearrec}) is $s_n = 4s_{n-3} - s_{n-6}$. We now need to show that no polynomial multiple of $x^6 - 4x^3 + 1$ can be written as $x^h - \sum_{i=1}^h d_i x^{h-i}$ where all $d_i$ are non-negative integers.

Consider any multiple $\sum_{i=0}^{k+6} c_i x^i = (\sum_{j=0}^k p_j x^j)(x^6-4x^3+1)$, where $p_k \neq 0$. This corresponds to a linear recurrence with non-negative coefficients if and only if $c_i \leq 0$ for all $i<k+6$ and $c_{k+6} > 0$. Assume for contradiction that $c_{k+6} > 0$ and $c_i \leq 0$ for all $i < k+6$.

By expanding, we find
\begin{align}
\sum_{i=0}^{k+6} c_i x^i \ = \   \  \left (\sum_{j=0}^k p_j x^j \right ) \left (x^6-4x^3+1\right) \ = \   \sum_{i=0}^{k+6} (p_i - 4p_{i-3} + p_{i-6})x^i
\end{align}

Let $t$ be the smallest non-negative integer so that $p_t \neq 0$.

We claim that for all $n\in\N$ with $t+3n < k+6$, we have $p_{t+3n} \leq 3 p_{t+3n-3}$ and $p_{t+3n} < 0$. The proof follows by induction. In the arguments below we frequently use  $c_{k+6} > 0$ and $c_i \leq 0$ for all $i < k+6$ (which we are assuming to show a contradiction follows).

We have $c_t = p_t$ because $p_i = 0$ for all $i < t$. Since $t\leq k$, we know $p_t = c_t < 0$. Hence $p_t \leq p_{t-3}$. Further, $c_{t+3} = p_{t+3} - 4p_{t} + p_{t-3}$. Since $c_{t+3} \leq 0$ and $p_{t-3} = 0$, we have $p_{t+3} \leq 4p_t < 0$. This proves the base cases $n=0$ and $n=1$.

For any $n$ such that $t+3n < k+6$, we know $c_{t+3n} \leq 0$. This gives us $p_{t+3n} - 4p_{t+3n-3} + p_{t+3n-6} \leq 0$, from which we have $p_{t+3n} \leq 4p_{t+3n-3} - p_{t+3n-6}$. We know $p_{t+3n-3} - p_{t+3n-6} \leq 0$ by the induction hypothesis and hence $p_{t+3n} \leq 3 p_{t+3n-3} < 0$.

By induction, we have $p_{t+3n} \leq 3 p_{t+3n-3}$ and $p_{t+3n} < 0$ for all $n\in\N$ with $t+3n < k+6$.

Choose $n'$ so that $k < t+3n' < k+6$. By the above claim, $p_{t+3n'} < 0$. However, we know $t+3n'>k$ and $p_i = 0$ for all $i>k$. This is a contradiction. Hence the recurrence relation corresponding to any polynomial multiple of $x^6 - 4x^3 + 1$ has at least one negative coefficient $d_i$ when written as $s_n = \sum_{i=1}^{k} d_i s_{n-i}$.
\end{proof}

%%%%%%%%%%%%%%%%%%%%%%%%%%%%%%%%%%%%%%%%%%%%%%%%%%%%%%%%%%%%%%%%%%%%%%%%%%%%%%%%%%%%%%%%%%%%%%%%%%%%%%%%%%%%%%%%%%%%%%%%%%%%%%%%%%%%%%%%%%%%%%%%%%%%
%%%%%%%%%%%%%%%%%%%%%%%%%%%%%%%%%%%%%%%%%%%%%%%%%%%%%%%%%%%%%%%%%%%%%%%%%%%%%%%%%%%%%%%%%%%%%%%%%%%%%%%%%%%%%%%%%%%%%%%%%%%%%%%%%%%%%%%%%%%%%%%%%%%%
%%%%%%%%%%%%%%%%%%%%%%%%%%%%%%%%%%%%%%%%%%%%%%%%%%%%%%%%%%%%%%%%%%%%%%%%%%%%%%%%%%%%%%%%%%%%%%%%%%%%%%%%%%%%%%%%%%%%%%%%%%%%%%%%%%%%%%%%%%%%%%%%%%%%
%%%%%%%%%%%%%%%%%%%%%%%%%%%%%%%%%%%%%%%%%%%%%%%%%%%%%%%%%%%%%%%%%%%%%%%%%%%%%%%%%%%%%%%%%%%%%%%%%%%%%%%%%%%%%%%%%%%%%%%%%%%%%%%%%%%%%%%%%%%%%%%%%%%%

\ \\


\begin{thebibliography}{BBGILMT} % '2nd argument contains the widest acronym'

\bibitem[Al]{Al}
H. Alpert,  \emph{Differences of multiple Fibonacci numbers}, Integers: Electronic Journal of Combinatorial Number Theory  \textbf{9} (2009), 745--749.

\bibitem[BBGILMT]{BBGILMT}
O. Beckwith, A. Bower, L. Gaudet, R. Insoft, S. Li, S. J. Miller and P. Tosteson, \emph{The Average Gap Distribution for Generalized Zeckendorf Decompositions},  The Fibonacci Quarterly \textbf{51} (2013), 13--27.

\bibitem[Bi]{Billingsley}
Billingsley, P. \emph{Probability and Measure, 2nd ed.} New York: Wiley, 1986.

\bibitem[BCCSW]{BCCSW}
E. Burger, D. C. Clyde, C. H. Colbert, G. H. Shin and Z. Wang, \emph{A Generalization of a Theorem of Lekkerkerker to Ostrowski's Decomposition of Natural Numbers}, Acta Arith. \textbf{153} (2012), 217--249.

\bibitem[Day]{Day}
D. E. Daykin,  \emph{Representation of Natural Numbers as Sums of Generalized Fibonacci Numbers}, J. London Mathematical Society \textbf{35} (1960), 143--160.

\bibitem[DG]{DG}
M. Drmota and J. Gajdosik, \emph{The distribution of the sum-of-digits function}, J. Th\'eor. Nombr\'es Bordeaux \textbf{10} (1998), no. 1, 17--32.

\bibitem[FGNPT]{FGNPT}
P. Filipponi, P. J. Grabner, I. Nemes, A. Peth\H{o}, and R. F. Tichy, \emph{Corrigendum to: ``Generalized Zeckendorf expansions''}, Appl. Math. Lett., \textbf{7} (1994), no. 6, 25--26.

\bibitem[FS]{FS} P. Flajolet, and M. Soria, \emph{Gaussian limiting distributions for the number of components in combinatorial structures}, Journal of Combinatorial Theory, Series A, \textbf{53} (1990), 165--182

\bibitem[GT]{GT}
P. J. Grabner and R. F. Tichy, \emph{Contributions to digit expansions with respect to linear recurrences}, J. Number Theory \textbf{36} (1990), no. 2, 160--169.

\bibitem[GTNP]{GTNP}
P. J. Grabner, R. F. Tichy, I. Nemes, and A. Peth\H{o}, \emph{Generalized Zeckendorf expansions},Appl. Math. Lett. \textbf{7} (1994), no. 2, 25--28.

\bibitem[Ha]{Ha} N. Hamlin,  \emph{Representing Positive Integers as a Sum of Linear Recurrence Sequences}, Abstracts of Talks, Fourteenth International Conference on Fibonacci Numbers and Their Applications (2010), pages 2--3.

\bibitem[Ho]{Ho} V. E. Hoggatt,  \emph{Generalized Zeckendorf theorem}, Fibonacci Quarterly \textbf{10} (1972), no. 1 (special issue on representations), pages 89--93.

\bibitem[Ke]{Ke} T. J. Keller,  \emph{Generalizations of Zeckendorf's theorem}, Fibonacci Quarterly \textbf{10} (1972), no. 1 (special issue on representations), pages 95--102.

\bibitem[KKMW]{KKMW} Kologlu, Kopp, Miller and Wang: \emph{Gaussianity for Fibonacci case}, Fibonacci Quarterly \textbf{49} (2011), no. 2, 116--130.

\bibitem[Kos]{Kos} T. Koshy, \emph{Fibonacci and Lucas Numbers with Applications}, Wiley-Interscience, New York, $2001$.

\bibitem[LamTh]{LamTh}
M. Lamberger and J. M. Thuswaldner, \emph{Distribution properties of digital expansions arising from linear recurrences}, Math. Slovaca \textbf{53} (2003), no. 1, 1--20.

\bibitem[LaTa]{LaTa}
R. G. Larson and E. J. Taft, \emph{The algebraic structure of linearly recursive sequences under Hadamard product}, Israel J. Math. \textbf{72} (1990), issue 1-2, pages 118--132.

\bibitem[Lek]{Lek} C. G. Lekkerkerker,  \emph{Voorstelling van natuurlyke getallen door een som van getallen van Fibonacci}, Simon Ste`vin \textbf{29} (1951-1952), 190--195.

\bibitem[Len]{Len} T. Lengyel, \emph{A Counting Based Proof of the Generalized Zeckendorf's Theorem}, Fibonacci Quarterly \textbf{44} (2006), no. 4, 324--325.

\bibitem[MW1]{MW1} S. J. Miller and Y. Wang,  \emph{From Fibonacci Numbers to Central Limit Type Theorems}, Journal of Combinatorial Theory, Series A \textbf{119} (2012), no. 7, 1398--1413.

\bibitem[MW2]{MW2} S. J. Miller and Y. Wang,  \emph{Gaussian Behavior in Generalized Zeckendorf Decompositions},  to appear in the conference proceedings of the 2011 Combinatorial and Additive Number Theory Conference. \burl{http://arxiv.org/abs/1107.2718}.

\bibitem[St]{St}
W. Steiner, \emph{Parry expansions of polynomial sequences}, Integers \textbf{2} (2002), Paper A14.

\bibitem[Wi]{Wi}
S. S. Wilks, \emph{Mathematical Statistics}, John Wiley \& Sons, New York, 1962.

\bibitem[Ze]{Ze} E. Zeckendorf, \emph{Repr\'esentation des nombres naturels par une somme des nombres de Fibonacci ou de nombres de Lucas}, Bulletin de la Soci\'et\'e Royale des Sciences de Li\`ege \textbf{41} (1972), pages 179--182.


\end{thebibliography}
\end{document}